\RequirePackage[l2tabu, orthodox]{nag}

\documentclass[12pt,reqno]{amsart}
\usepackage{fullpage,url,amssymb,enumerate,colonequals}
\usepackage{mathrsfs} 
\usepackage{enumitem}  
\usepackage{appendix}
\usepackage{multicol}
\usepackage{afterpage}
\usepackage{multirow}
\usepackage{pdflscape}
\usepackage{afterpage}
\usepackage{lscape}
\usepackage{tabularx}

\usepackage[dvipsnames,xcdraw,hyperref]{xcolor}

\newcommand{\m}[2]{\multirow{#1}{*}{#2}}


\newcommand{\C}{\mathbb{C}}

\newcommand{\Q}{\mathbb{Q}}

\newcommand{\R}{\mathbb{R}}
\newcommand{\Z}{\mathbb{Z}}

\newcommand{\Qbar}{{\overline{\Q}}}

\newcommand{\pp}{\mathfrak{p}}


\newcommand{\calD}{\mathcal{D}}

\newcommand{\calO}{\mathcal{O}}


\DeclareMathOperator{\Norm}{Norm}





\newcommand{\injects}{\hookrightarrow}
\newcommand{\intersect}{\cap} 
\newcommand{\isom}{\simeq}

\newcommand{\tensor}{\otimes} 

\newcommand{\union}{\cup} 


\newtheorem{theorem}{Theorem}[section]
\newtheorem{lemma}[theorem]{Lemma}
\newtheorem{corollary}[theorem]{Corollary}
\newtheorem{proposition}[theorem]{Proposition}

\theoremstyle{definition}

\theoremstyle{remark}
\newtheorem{remark}[theorem]{Remark}

\usepackage{scalerel,stackengine}
\stackMath
\newcommand\reallywidehat[1]{%
\savestack{\tmpbox}{\stretchto{%
  \scaleto{%
    \scalerel*[\widthof{\ensuremath{#1}}]{\kern.1pt\mathchar"0362\kern.1pt}%
    {\rule{0ex}{\textheight}}
  }{\textheight}%
}{2.4ex}}%
\stackon[-8.5pt]{#1}{\tmpbox}%
}

\makeatletter
\g@addto@macro\bfseries{\boldmath} 
\makeatother

\usepackage{tikz}

\usepackage[
	pagebackref,
	pdfauthor={}, 
	pdftitle={Dehn Invariant Zero Tetrahedra},
]{hyperref}

\usepackage{microtype}  


\makeatletter
\@namedef{subjclassname@2020}{%
  \textup{2020} Mathematics Subject Classification}
\makeatother

\begin{document}

\title{Dehn Invariant Zero Tetrahedra}
\subjclass[2020]{Primary 52B10; Secondary: 12F05, 11E95 }
\keywords{Tetrahedra, Dehn Invariant, Regge Symmetry, Hill Tetrahedra}

\author{A. Anas Chentouf}
\address{Department of Mathematics, Massachusetts Institute of Technology, Cambridge, MA 02139-4307, USA}
\email{chentouf@mit.edu}

\author{Yihang Sun}
\address{Department of Mathematics, Massachusetts Institute of Technology, Cambridge, MA 02139-4307, USA}
\email{kimisun@mit.edu}


\thanks{This research was supported in part by National Science Foundation grant DMS-1601946.}

\date{December 2, 2023}

\begin{abstract}
We survey literature on all known families and examples of Dehn invariant zero tetrahedra. We also contribute two previously unknown families of Dehn invariant zero tetrahedra. Following a suggestion of Dill and Habegger, we show that there are finitely many Dehn invariant zero tetrahedra whose dihedral angles have a five-dimensional span over $\mathbb{Q}$, a first step towards classifying all Dehn invariant zero tetrahedra.

\end{abstract}

\maketitle


\section{Introduction}
\subsection{History}
The Dehn invariant of a polyhedra (see Section~\ref{sec:notations}) was introduced by Max Dehn in 1901 to prove Hilbert's Third Problem that not all polyhedra with equal volume could be dissected into each other \cite{dehn1901ueber}. It is invariant under \emph{scissors-cutting}, where a polyhedron $P$ is sliced into smaller polyhedra using planes and these slices are rearranged to form polyhedron $P'$. In this case, the two polyhedra $P$ and $P'$ are said to be \emph{scissors-congruent}. 
In 1901, Dehn proved that two scissors-congruent polyhedra have equal volume and Dehn invariant \cite{dehn1901ueber}. The converse was shown by Sydler in 1965 \cite{MR0192407}. 

The Dehn invariant is also related to the study of space tiling polyhedra, which dates back to Aristotle \cite{doi:10.1080/0025570X.1981.11976933}. A polyhedron \emph{tiles} if one can place congruent copies of it in $\mathbb{R}^3$ so that every point lies in the interior of exactly one copy or at the intersection of boundaries of two or more copies. Debrunner showed that tiling polyhedra have Dehn invariant zero \cite{MR604258}.

The results on scissors-congruence and tiling motivated further work on cataloging Dehn invariant zero tetrahedra. In $1896$, Hill discovered infinite families of tetrahedra which tile space \cite{MR1576480}. In the $1970$s, Boltyanskii compiled a list of Dehn invariant zero tetrahedra and computed several properties \cite{boltianskiui1979hilbert}. This list includes the Hill families and those discovered by Goldberg \cite{goldberg1974three} and others.  
A classification of tetrahedra with rational dihedral angles (a subset of Dehn invariant zero tetrahedra) was conjectured by Poonen and Rubinstein in $1995$ and proven in $2020$ by the two authors with Kedlaya and Kolpakov \cite{kedlaya2020space}.

\subsection{Main results} 
Our first result is a new approach to classifying Dehn invariant zero tetrahedra. The $\Q$-span $V\subset \R/\Q\pi$ of the six dihedral angles of a Dehn invariant zero tetrehedron has dimension between $0$ and $6$. We divide into cases numbered by this dimension.
\begin{itemize}
\item Case $0$: this requires all the dihedral angles to lie in $\Q\pi$; \cite{kedlaya2020space} classifies them.
\item Case $2$: we provide in Theorem~\ref{new-fam} two new one-parameter families of tetrahedra.
\item Case $5$: we prove Theorem~\ref{main-case5} in Section~\ref{case5} that there are finitely many such tetrahedra.
\item Case $6$: by linear independence, the vanishing of the Dehn invariant implies that all the edge lengths $e_{ij}$ are $0$; there are no such nondegenerate tetrahedra.
\end{itemize}

\begin{theorem}\label{main-case5}
Any Dehn invariant zero tetrahedron with a five dimensional $\Q$-span in $\R/\Q\pi$ of the dihedral angles is similar to one with integer side lengths, each at most $3.946 \times 10^{12}$.
\end{theorem}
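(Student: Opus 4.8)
The plan is to translate the Dehn invariant zero condition into a system of linear equations over $\Q$ among the edge lengths, indexed by a basis of the five-dimensional span $V \subset \R/\Q\pi$, and then combine this with the constraint that the six dihedral angles are the genuine dihedral angles of an actual tetrahedron. Recall that the Dehn invariant of a tetrahedron $T$ is $\sum_{i<j} e_{ij} \otimes \theta_{ij} \in \R \otimes_\Q (\R/\Q\pi)$, where $e_{ij}$ is the length of the edge joining vertices $i,j$ and $\theta_{ij}$ is the corresponding dihedral angle. If $\theta_1, \dots, \theta_5$ is a $\Q$-basis of $V$, writing each $\theta_{ij} = \sum_k c_{ij,k}\,\theta_k$ with $c_{ij,k} \in \Q$, the vanishing of the Dehn invariant is equivalent to the five linear relations $\sum_{i<j} c_{ij,k}\, e_{ij} = 0$ for $k = 1, \dots, 5$. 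Clearing denominators, this gives five independent linear relations with \emph{integer} coefficients among the six edge lengths $(e_{ij})$, so the edge-length vector lies on a rational line; in particular, up to scaling, $T$ is similar to a tetrahedron with integer edge lengths. First I would make this precise, being careful that the five relations are genuinely independent (this uses that $V$ is exactly five-dimensional, not less) so that the solution space is one-dimensional.

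The next step is to extract an effective bound on the size of the integer edge lengths. The one-dimensional solution space of the $5 \times 6$ integer system is spanned by the vector of $5\times 5$ minors (with signs) of the coefficient matrix, so the primitive integer solution has entries bounded in terms of those minors, hence in terms of the heights of the coefficients $c_{ij,k}$. The difficulty is that a priori the $c_{ij,k}$ are arbitrary rationals: we need an a priori bound on their denominators and numerators. This is where the ``actual tetrahedron'' constraint must enter. The six dihedral angles of a Euclidean tetrahedron are not free: they satisfy the Gram relation, namely that the $4 \times 4$ Gram matrix $G$ of the face normals, with $G_{ab} = -\cos\theta_{ab}$ off-diagonal and $1$ on the diagonal, is positive semidefinite of rank $3$, i.e. $\det G = 0$ and all principal $3\times 3$ minors are positive. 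Expanding $\det G = 0$ gives a polynomial relation among the $\cos\theta_{ij}$. Combined with the fact that each $\theta_{ij} \in V + \Q\pi$ and $V$ is only five-dimensional, the $\cos\theta_{ij}$ are constrained algebraically, and I expect this forces the angles (equivalently the ratios $c_{ij,k}$) into a finite, effectively computable set — or at least bounds the complexity of the $c_{ij,k}$.

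The key obstacle, and the technical heart of the argument, is exactly this last point: showing that the five-dimensionality of $V$ together with the Gram determinant relation $\det G = 0$ forces only finitely many possibilities for the vector $(\theta_{ij} \bmod \Q\pi)$ up to the $\GL$-action that changes basis of $V$, with an \emph{effective} bound. The natural tool is a result on linear relations among logarithms of algebraic numbers / the structure of solutions to $\det G = 0$ where the entries are cosines of angles lying in a low-dimensional $\Q$-vector space modulo $\pi$ — this is presumably the ``suggestion of Dill and Habegger'' alluded to in the abstract, likely a consequence of results on unlikely intersections or the subspace theorem / a theorem bounding algebraic points on the hypersurface $\det G = 0$ lying in a torsion coset translate. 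Concretely, one writes $z_{ij} = e^{2\sqrt{-1}\,\theta_{ij}}$, notes that $\cos\theta_{ij} = \tfrac12(z_{ij} + z_{ij}^{-1})$ and that the multiplicative relations among the $z_{ij}$ are governed by the linear relations among the $\theta_{ij}$ in $V$, so $\det G = 0$ becomes a Laurent-polynomial equation $F(z_1^{a_1}\cdots) = 0$ in the five coordinates of $V$; the number and height of its solutions that are ``non-degenerate'' (not forced by a sub-relation, which would drop the dimension below five) is bounded effectively. I would then carefully track constants through: the degree and height of $F$ are absolutely bounded since $G$ is $4\times 4$; an effective result on such equations yields a bound on the heights of the $c_{ij,k}$, hence on the minors, hence on the integer edge lengths, and finally one optimizes the chain of inequalities to land at the explicit numerical bound $3.946 \times 10^{12}$.
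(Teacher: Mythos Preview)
Your first paragraph is correct and matches the paper's Lemma~\ref{lem:case5-integral}: the five-dimensionality forces the edge-length vector onto a rational line, so after scaling the $e_{ij}$ are integers and $\sum e_{ij}\alpha_{ij}\in\Q\pi$. The gap is in the rest. You propose to bound the coefficients $c_{ij,k}$ by applying an unlikely-intersections or subspace-type result to the Gram hypersurface $\det G=0$, but this cannot give finiteness as stated: $\det G=0$ is a single equation, and the moduli of tetrahedra up to similarity is already $5$-dimensional, so its intersection with any fixed codimension-$1$ torsion coset of $(\C^\times)^6$ is generically $4$-dimensional, not a finite set. There is no ``unlikely'' intersection here, and no mechanism in your outline pins down the height of the $c_{ij,k}$ (which, after all, depend on the arbitrary choice of basis of $V$).

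What the paper does instead is choose the linear functionals on $\langle z_{ij}\rangle$ to be $p$-adic valuations rather than abstract coordinates on $V$. The key arithmetic input (equation~\eqref{E:z_ij^2}) is that each $z_{ij}^2$ lies in the \emph{quadratic} field $K=\Q(\sqrt{-2D})$, with denominator $D_{ijk}D_{ij\ell}=O(N^8)$; hence for every prime $\pp$ of $K$ one has $|v_{\pp}(z_{ij}^2)|=O(\log N)$. Form the integer matrix $A=(v_{\pp_j}(z_i^2))$; it has rank $5$ (because the $z_i$ generate a rank-$5$ multiplicative group) and $\vec n A=0$. Pick a $6\times 6$ submatrix $B$ of rank $5$: some column of the adjugate $B^*$ is a nonzero integer multiple of $\vec n$, so $B^*$ has an entry of absolute value $\ge N$. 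But that entry is a $5\times 5$ minor of $B$, which Hadamard bounds by a product of five row $\ell^1$-norms, each $O(\log N)$. Optimizing over the five primes $\{2,3,5,7,11\}$ yields $N\le C(\log N)^5$ and the stated bound $3.946\times 10^{12}$. The ``suggestion of Dill and Habegger'' is precisely this valuation-matrix/adjugate argument, not a Zilber--Pink-type statement; the specific numerical bound comes out of Hadamard's inequality and the explicit denominator bound $|b_i|\le 9N^8$, neither of which is visible in your outline.
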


To classify Dehn invariant zero tetrahedra, it is useful to consider some generic symmetries of tetrahedra: there is the $S_4$ symmetry from permuting the vertex labels, as well as Regge symmetries spelled out in Theorem~\ref{regge} from \cite{akopyan2019regge}. The two families found in Case 2 would be new in the sense of both of these symmetries. We summarize our results there as follows.

\begin{theorem}\label{new-fam}    
With real number parameter $t > 4$, consider the following two one-parameter families of tetrehedra labeled by the six edge lengths in the order $(12, 34, 13, 24, 14, 23)$: 
{\small \begin{equation}\label{E:TT'}
T=\left(t+1, t-1, t+1, t-1, t, 6\right)\quad \text{and}\quad T'=\left(t+1, t-1,\frac{t}{2}+2, \frac{t}{2}+4, \frac{t}{2}+3, \frac{3t}{2}-3\right).
\end{equation}}
\begin{enumerate}
    \item Both families have Dehn invariant zero.
    \item The dimensions of the spans of the dihedral angles over $\Q$ are both generally $2$.
    \item Both families are ``new'' (i.e. they are not identical or Regge equivalent to any of the three Hill families) and they are the only families in their Regge equivalence class.
\end{enumerate}
\end{theorem}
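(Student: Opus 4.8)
The plan is to establish (1)–(3) for the family $T$ and then to deduce the statements for $T'$ from part (3). Recall that the Dehn invariant of a Euclidean tetrahedron with edge lengths $e_{ij}$ and dihedral angles $\theta_{ij}$ (the angle between the two faces meeting along edge $ij$) is $\mathcal{D}=\sum_{i<j}e_{ij}\otimes\theta_{ij}\in\R\otimes_\Q(\R/\Q\pi)$, and that for the families in \eqref{E:TT'} the squared edge lengths are explicit quadratics in $t$ while each $\cos\theta_{ij}$ is an explicit algebraic function of them (via the Gram matrix of the outward face normals, equivalently via ratios of Cayley–Menger minors); so every dihedral angle is a fixed branch of an algebraic function of $t$. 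For part (1) applied to $T$, the edge six-tuple is invariant under the transposition $2\leftrightarrow 3$ of vertex labels (it interchanges $e_{12}\leftrightarrow e_{13}$ and $e_{24}\leftrightarrow e_{34}$ and fixes $e_{14},e_{23}$), which is therefore an isometry of the tetrahedron; hence $\theta_{12}=\theta_{13}=:\alpha$, $\theta_{24}=\theta_{34}=:\beta$, and
\[
\mathcal{D}=2(t+1)\otimes\alpha+2(t-1)\otimes\beta+t\otimes\theta_{14}+6\otimes\theta_{23}.
\]
After computing $\cos\alpha,\cos\beta,\cos\theta_{14},\cos\theta_{23}$, I would exhibit two further (``hidden'') $\Q$-linear relations modulo $\Q\pi$ among these four angles, so that $\alpha,\beta,\theta_{14},\theta_{23}$ span a $2$-dimensional subspace $V\subseteq\R/\Q\pi$; then, fixing a $\Q$-basis $\{\xi,\eta\}$ of $V$ and writing $\theta_{ij}\equiv a_{ij}\xi+b_{ij}\eta\pmod{\Q\pi}$ with $a_{ij},b_{ij}\in\Q$, I would verify the two scalar identities $\sum_{i<j}e_{ij}a_{ij}=0=\sum_{i<j}e_{ij}b_{ij}$, which (since $\xi,\eta$ are $\Q$-independent in $\R/\Q\pi$) force $\mathcal{D}=0$.

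For part (2), the four relations just used hold identically in $t$, so $\dim_\Q V\le 2$ for every admissible $t$. For the reverse inequality, $\dim_\Q V\le 1$ would force a relation $p\,\theta_{14}(t)+q\,\theta_{23}(t)\in\Q\pi$ with $(p,q)\in\Q^2\setminus\{(0,0)\}$; for each fixed $(p,q)$ this holds for only countably many $t$ (a non-constant branch of an algebraic function meets the countable set $\Q\pi$ countably often), and the union over all $(p,q)$ stays countable provided $\dim_\Q V=2$ for at least one $t$ — which I would confirm by directly checking at a convenient value $t_0$ that two dihedral angles of $T$ are $\Q$-independent modulo $\Q\pi$. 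Hence $\dim_\Q V=2$ outside a countable set of $t$, which is the asserted ``generally $2$''.

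For part (3), Theorem~\ref{regge} gives that the group $\mathcal{G}$ generated by the $S_4$ vertex relabelings and the Regge involutions is finite, acts on ordered edge six-tuples, and preserves the Dehn invariant (indeed Regge-equivalent tetrahedra are scissors congruent). I would compute the $\mathcal{G}$-orbit of the family $T$ and check that, up to relabeling and an affine change of parameter, it consists exactly of $T$ and $T'$: this gives $T\sim_{\mathrm{Regge}} T'$ (so parts (1) and (2) for $T'$ follow from those for $T$) and shows $T,T'$ exhaust their common Regge equivalence class. To see the class is new, I would compare it with the three Hill families via a quantity invariant under both $\mathcal{G}$ and rescaling — for instance the scale-normalized Cayley–Menger determinant as a function of the parameter — and check that no member of the orbit of $T$ is similar to any member of any Hill family under any affine reparametrization of Hill's parameter. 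The conceptual heart of the argument is producing the two hidden angle relations in part (1); the main obstacles will be the irrationality-type input in part (2) ($\Q$-independence modulo $\Q\pi$ of two dihedral angles at one parameter value) and the finite-but-delicate bookkeeping in part (3) — enumerating the $\mathcal{G}$-orbit, matching it with $T'$, and excluding similarity with every suitably reparametrized and rescaled Hill tetrahedron.
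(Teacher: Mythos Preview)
Your outline for parts (1) and (2) matches the paper's approach almost exactly: the paper exploits the $2\leftrightarrow 3$ symmetry to get $\alpha_{12}=\alpha_{13}$ and $\alpha_{24}=\alpha_{34}$, then exhibits the two hidden relations explicitly as $2\alpha_{12}+2\alpha_{24}+\alpha_{14}=2\pi$ and $\alpha_{12}-\alpha_{24}+3\alpha_{23}=\pi$, from which $\sum e_{ij}\alpha_{ij}=2(t+1)\pi$ follows; and for (2) it argues $\dim\le 2$ from these relations and $\dim=2$ generically from a single verification plus the countable-union argument you give. One point worth sharpening: your phrase ``directly checking at a convenient value $t_0$ that two dihedral angles are $\Q$-independent modulo $\Q\pi$'' hides real content. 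This is a transcendence-type statement, and the paper makes it effective by passing to $z_{ij}^2=\exp(2\mathrm{i}\alpha_{ij})\in\Q(\sqrt{-2D})$ at $t=16$ and computing a $2\times 2$ matrix of $p$-adic valuations (at primes above $3$ and $7$) that is invertible, forcing the multiplicative rank to be $2$. Without some such arithmetic device you do not actually have a proof of the lower bound.

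For part (3) your route differs from the paper's. You propose comparing $T$ with the Hill families via a scale-normalized Cayley--Menger determinant and ruling out coincidence under affine reparametrization; this can be made to work but involves matching one-parameter families against one another. The paper instead uses a much lighter invariant: by Corollary~\ref{regge cor}, the multiset of the three sums of opposite dihedral angles is Regge-invariant, and for $T$ two of these three sums coincide (from $\alpha_{12}=\alpha_{13}$, $\alpha_{24}=\alpha_{34}$). One then checks that no Hill family has this coincidence generically, which is a short case check. For the orbit computation the paper simply applies the three Regge involutions $R_1,R_2,R_3$ to $T$ and $T'$ and observes $R_1(T)=R_2(T)=R_3(T')=T'$ and $R_3(T)=R_1(T')=R_2(T')=T$, so the orbit is $\{T,T'\}$; your plan to enumerate the full $\mathcal{G}$-orbit is correct but heavier than needed.
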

Finally, we compile a list of known Dehn invariant zero tetrahedra. Table 1 lists the one-parameter families up to $S_4$ and Regge symmetries; Table 2 lists just $S_4$ orbits. We also compute dimension of the $\Q$-span in $\R/\Q\pi$ and contribute some isolated examples in Table 3.

\subsection{Notation}\label{sec:notations}
We use the tetrahedra notations in \cite{wirth2014relations,kedlaya2020space}. We label the vertices of a tetrahedron as $\lbrace 1, 2, 3, 4\rbrace $.
Let $e_{ij}$ be the length of the edge $ij$, and let $\alpha_{ij}$ be the dihedral angle along that edge.
We always list dihedral angles and edge lengths  with subscripts in the order $(12, 34, 13, 24, 14, 23)$. The \emph{Dehn invariant} of a tetrahedron is given by
\[
     \calD := \sum_{1\le i<j\le 4} e_{ij} \tensor \alpha_{ij} \; \in \; \R \tensor_{\Q} (\R/\Q\pi).
\]
Let $\textrm{i}=\sqrt{-1}$ and reserve the italicized $i$ for indexing tetrahedra. Let $\Q_p$ be the completion of $\Q$ with respect to the $p$-adic absolute value. Let $\Qbar_p$ be an algebraic closure of $\Q_p$.
\subsection*{Acknowledgements}
The authors would like to extend our profound gratitude towards Professor Bjorn Poonen for advising this project and for many insightful discussions. The authors would like to thank Gabriel Dill and Philipp Habegger for suggesting the method for Theorem~\ref{main-case5}, as well as Cerine Hamida and Shahzod Nazirov for help in translating Russian. 

\section{A \texorpdfstring{$p$}{p}-adic Approach to Case 5}
\label{case5}
In this section, we will first apply the method of $p$-adic valuations and combine it with the new approach of height functions, thereby proving Theorem~\ref{main-case5}.
\subsection{Setting up the valuations}\label{sec-val}
\begin{lemma}\label{lem:case5-integral}
In Case 5, $\{ e_{ij}\}_{ij}$ can be scaled to be simultaneously integral 
and $\sum e_{ij} \alpha_{ij} 
\in \Q\pi$.
\end{lemma}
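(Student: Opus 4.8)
The plan is to observe that in Case~5 there is, up to scaling, a \emph{single} $\Q$-linear relation among the reduced dihedral angles, and that vanishing of the Dehn invariant forces the vector of edge lengths to be a real multiple of the coefficient vector of that relation; integrality and the congruence then follow by choosing the scaling well.

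First I would set up the relation. Write $\overline{\alpha}_{ij}$ for the image of $\alpha_{ij}$ in $\R/\Q\pi$ and let $V \subseteq \R/\Q\pi$ be its $\Q$-span. By hypothesis $\dim_\Q V = 5$, so the surjection $\Q^6 \twoheadrightarrow V$ sending the $ij$-th standard basis vector to $\overline{\alpha}_{ij}$ has one-dimensional kernel; fix a nonzero $\mathbf{c} = (c_{ij}) \in \Q^6$ spanning it, so that $\sum_{ij} c_{ij}\overline{\alpha}_{ij} = 0$ in $\R/\Q\pi$, and pick an index $kl$ with $c_{kl} \neq 0$. Then $\{\overline{\alpha}_{ij} : ij \neq kl\}$ has five elements and spans $V$ (since $\overline{\alpha}_{kl}$ is a $\Q$-combination of the rest), hence is a $\Q$-basis of $V$.

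Next I would expand $\calD = 0$ in this basis. Since $\R$ is free as a $\Q$-module, for a $\Q$-basis $v_1,\dots,v_5$ of $V$ one has $\R \otimes_\Q V = \bigoplus_{m}(\R \otimes v_m)$, so $\sum_m r_m \otimes v_m = 0$ in $\R\otimes_\Q V$ forces every $r_m = 0$. Substituting $\overline{\alpha}_{kl} = -c_{kl}^{-1}\sum_{ij \neq kl} c_{ij}\overline{\alpha}_{ij}$ into $\calD = \sum_{ij} e_{ij} \otimes \overline{\alpha}_{ij} = 0$ yields $\sum_{ij \neq kl}\bigl(e_{ij} - c_{kl}^{-1}c_{ij}e_{kl}\bigr)\otimes\overline{\alpha}_{ij} = 0$, whence $e_{ij} = (c_{ij}/c_{kl})\,e_{kl}$ for every $ij$. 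Equivalently, $(e_{ij})_{ij} = \lambda\,\mathbf{c}$ for the real number $\lambda = e_{kl}/c_{kl} \neq 0$; after replacing $\mathbf{c}$ by $-\mathbf{c}$ if necessary we may assume $\lambda > 0$, the edge lengths being positive.

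Finally, scaling the tetrahedron by $1/\lambda$ replaces the edge-length vector by $\mathbf{c} \in \Q^6$, so one further scaling by a common denominator of the $c_{ij}$ makes all six edge lengths integers. For the resulting tetrahedron, $\sum_{ij} e_{ij}\alpha_{ij}$ is an integer multiple of $\sum_{ij} c_{ij}\alpha_{ij}$, which lies in $\Q\pi$ precisely because $\sum_{ij} c_{ij}\overline{\alpha}_{ij} = 0$ in $\R/\Q\pi$; this gives the second assertion. I do not expect a genuine obstacle: the only steps needing a little care are the linear-algebra point that a $\Q$-basis of $V$ stays $\R$-linearly independent after $\otimes_\Q\R$ (so the coefficients above may legitimately be equated to zero one at a time) and the sign bookkeeping that keeps the scaled edge lengths positive. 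The essential content is simply that the one relation among the $\overline{\alpha}_{ij}$ pins down the edge-length vector up to a scalar.
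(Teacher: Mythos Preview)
Your argument is correct and follows essentially the same route as the paper: identify the one-dimensional kernel of $\Q^6 \to \R/\Q\pi$, then use flatness of $\R$ over $\Q$ to see that the Dehn-invariant-zero condition forces the edge-length vector to lie in the $\R$-span of that kernel, whence scaling gives integrality and $\sum e_{ij}\alpha_{ij}\in\Q\pi$. The only cosmetic difference is that the paper packages the linear-algebra step as ``tensoring the map with $\R$ preserves the kernel,'' whereas you unwind this by choosing an explicit $\Q$-basis of $V$ and reading off coefficients.
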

\begin{proof}
By assumption, the map $\Q^6 \to \R/\Q \pi$ with $(q_{ij})\mapsto \sum_{i<j} q_{ij} \alpha_{ij}$ has a one-dimensional kernel $\Q \vec{a}$, for some $\vec{a} \in \Q^6$; we may scale $\vec{a}$ to assume that $\vec{a} \in \Z^6$ and $a_{ij}>0$ for some $i,j$.  Tensoring with $\R$ gives a map $\R^6 \to \R \tensor_{\Q} \R/\Q\pi$ with kernel $\R \vec{a}$. Now, $\vec{e} = (e_{ij}) \in \R^6$ maps to $\calD =0$, so $\vec{e} \in \R \vec{a}$. After scaling, $\vec{e} = \vec{a} \in \Z^6$, and $\sum e_{ij} \alpha_{ij} = \sum a_{ij} \alpha_{ij} = 0$ in $\R/\Q \pi$.
\end{proof}
\begin{proposition}[{\cite[Lemma 5 and Theorem 1]{wirth2014relations}}]\label{prop-WD}
Define the Cayley-Menger matrix 
\[ M := \begin{pmatrix}
0 & e_{12}^2 & e_{13}^2 & e_{14}^2 & 1 \\
e_{12}^2 & 0 & e_{23}^2 & e_{24}^2 & 1 \\
e_{13}^2 & e_{23}^2 & 0 & e_{34}^2 & 1 \\
e_{14}^2 & e_{24}^2 & e_{34}^2 & 0 & 1 \\
1 & 1 & 1 & 1 & 0
\end{pmatrix}.\]
Let $D:=\det M$, let $D_{ijk}$ be the $(\ell, \ell)$-minor of $M$, and let $D_{ij}$ be the $(-1)^{k+\ell}\cdot (k, \ell)$-minor of $M$, where $\{i, j, k, \ell\}= \{1, 2, 3, 4\}$.
Then, for each dihedral angle $\alpha_{ij}$,
\begin{equation}\label{E:trig-aij}
\cos \alpha_{ij} = \frac{D_{ij}}{\sqrt{D_{ijk}D_{ij\ell}}} \quad\text{and}\quad \sin \alpha_{ij} = \frac{e_{ij}\sqrt{2D}}{\sqrt{D_{ijk}D_{ij\ell}}}.
\end{equation}
Moreover, $D=288V^2$ where $V$ is the volume of the tetrahedron with edge lengths $(e_{ij})_{ij}$.
\end{proposition}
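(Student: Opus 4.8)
The plan is to realize the tetrahedron concretely, place vertex $1$ at the origin, and set $\vec{a}=P_2-P_1$, $\vec{b}=P_3-P_1$, $\vec{c}=P_4-P_1$. Each squared edge length is then a quadratic expression in these vectors via the law of cosines, e.g.\ $2\,\vec{a}\cdot\vec{b}=e_{12}^2+e_{13}^2-e_{23}^2$, so the entries of the $3\times 3$ Gram matrix $G=(\vec{a}\cdot\vec{a},\ldots)$ are explicit in the $e_{ij}^2$. Two standard facts anchor the computation: $\det G=(6V)^2=36V^2$ (since $6V=\lvert\det[\vec{a},\vec{b},\vec{c}]\rvert$), and the classical Cayley--Menger identity $\det M=(-1)^{4}2^{3}\det G=8\det G$, obtained by a unimodular sequence of row and column operations on the bordered matrix $M$ that subtracts off the all-ones border to expose the dot products. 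Combining the two gives $D=8\cdot 36V^2=288V^2$, the final assertion of the proposition.

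For the dihedral angle along edge $ij$, I would work with the two faces $ijk$ and $ij\ell$ meeting at that edge and take their normals $\vec{n}_1,\vec{n}_2$ as cross products of the two spanning edge vectors (for edge $12$: $\vec{n}_1=\vec{a}\times\vec{b}$ and $\vec{n}_2=\vec{a}\times\vec{c}$). Then $\lvert\vec{n}_1\rvert=2A_{ijk}$ and $\lvert\vec{n}_2\rvert=2A_{ij\ell}$, where $A$ denotes face area, and the interior dihedral angle satisfies $\cos\alpha_{ij}=\vec{n}_1\cdot\vec{n}_2/(\lvert\vec{n}_1\rvert\lvert\vec{n}_2\rvert)$ for an orientation-fixed sign. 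The Lagrange identity $(\vec{u}\times\vec{v})\cdot(\vec{u}\times\vec{w})=(\vec{u}\cdot\vec{u})(\vec{v}\cdot\vec{w})-(\vec{u}\cdot\vec{w})(\vec{u}\cdot\vec{v})$ rewrites the numerator as a $2\times 2$ expression in $G$. For the sine I would instead use the companion identity $(\vec{u}\times\vec{v})\times(\vec{u}\times\vec{w})=\big(\vec{u}\cdot(\vec{v}\times\vec{w})\big)\vec{u}$, whose magnitude gives $\lvert\vec{n}_1\times\vec{n}_2\rvert=e_{ij}\cdot 6V$; dividing by $\lvert\vec{n}_1\rvert\lvert\vec{n}_2\rvert=4A_{ijk}A_{ij\ell}$ yields $\sin\alpha_{ij}=6Ve_{ij}/(4A_{ijk}A_{ij\ell})$.

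It remains to translate areas and the cosine numerator into minors of $M$. The $(\ell,\ell)$-minor $D_{ijk}$ is exactly the Cayley--Menger determinant of the triangle $ijk$, which by the planar case of the same identity equals $-16A_{ijk}^2$; hence $\sqrt{D_{ijk}D_{ij\ell}}=16A_{ijk}A_{ij\ell}$. Substituting into the sine formula and using $\sqrt{2D}=\sqrt{576V^2}=24V$ reproduces $\sin\alpha_{ij}=e_{ij}\sqrt{2D}/\sqrt{D_{ijk}D_{ij\ell}}$ exactly. For the cosine, the key step is the identity $D_{ij}=(-1)^{k+\ell}\cdot(k,\ell)\text{-minor of }M=4\big[(\vec{a}\cdot\vec{a})(\vec{b}\cdot\vec{c})-(\vec{a}\cdot\vec{c})(\vec{a}\cdot\vec{b})\big]$ and its label-permuted analogues, obtained by tracking the signed cofactor through the same unimodular reduction relating $M$ to $G$ (equivalently, reading $D_{ij}$ off as an entry of the adjugate $\det(M)\,M^{-1}$). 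Since the cofactor numerator is $4(\vec{n}_1\cdot\vec{n}_2)$ and the denominator is $16A_{ijk}A_{ij\ell}=\sqrt{D_{ijk}D_{ij\ell}}$, this gives $\cos\alpha_{ij}=D_{ij}/\sqrt{D_{ijk}D_{ij\ell}}$; one can cross-check internally via $D_{ij}^2+2D\,e_{ij}^2=D_{ijk}D_{ij\ell}$, which is precisely $\cos^2\alpha_{ij}+\sin^2\alpha_{ij}=1$.

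The main obstacle is bookkeeping rather than ideas: keeping the orientation consistent so that the formula returns the interior (not exterior) dihedral angle, reconciling the negative sign of the triangular Cayley--Menger determinant with positivity under the square root, and checking that the $(-1)^{k+\ell}$ convention in the definition of $D_{ij}$ cancels exactly the signs introduced by the deletions and by the reduction, uniformly across all six edges in the fixed order $(12,34,13,24,14,23)$. Because the whole identity is equivariant for relabeling vertices, it suffices to carry out one representative edge in full and then invoke the $S_4$ symmetry of the labeling to obtain the remaining five cases.
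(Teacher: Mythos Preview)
The paper does not supply its own proof of this proposition: it is stated with the attribution \cite[Lemma~5 and Theorem~1]{wirth2014relations} and then used as a black box, so there is nothing in the paper to compare your argument against directly.

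That said, your sketch is the standard derivation of these Cayley--Menger/dihedral identities and is essentially correct. Placing vertex $1$ at the origin, expressing the Gram matrix $G$ of $\vec a,\vec b,\vec c$ in terms of the $e_{ij}^2$, and reducing $M$ to $G$ by unimodular row/column operations is exactly how the relation $D=8\det G=288V^2$ is usually obtained; the face--area interpretation $D_{ijk}=-16A_{ijk}^2$ and the triple--product identity $(\vec a\times\vec b)\times(\vec a\times\vec c)=\bigl(\vec a\cdot(\vec b\times\vec c)\bigr)\vec a$ give the sine formula cleanly. The only places to be careful are the ones you already flag: with the specific choice $\vec n_1=\vec a\times\vec b$, $\vec n_2=\vec a\times\vec c$ the angle between the normals may be the supplement of the interior dihedral angle, so one must fix a consistent outward/inward convention before asserting $\cos\alpha_{ij}=\vec n_1\cdot\vec n_2/(\lvert\vec n_1\rvert\lvert\vec n_2\rvert)$; and the identification of the signed $(k,\ell)$-cofactor $D_{ij}$ with $4\,\vec n_1\cdot\vec n_2$ requires tracking the $(-1)^{k+\ell}$ through the reduction of $M$, which is pure bookkeeping but should be written out once for a representative edge before invoking $S_4$-equivariance. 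With those sign checks made explicit, the argument is complete.
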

Let $z_{ij} := \exp(\textrm{i} \alpha_{ij})$. Apply $t \mapsto \exp(\textrm{i}t)$ to both sides of Lemma~\ref{lem:case5-integral} to get 
\begin{equation}
\label{E:zeta}
     \prod_{i<j} z_{ij}^{e_{ij}} = \zeta
\end{equation}
for some root of unity $\zeta$. If $z=e^{\textrm{i} \theta}$, then $z^2 - 2 (\cos \theta) z + 1 = 0$.
By \eqref{E:trig-aij}, $\cos \alpha_{ij} \in \pm \sqrt{\Q_{\ge 0}}$.
Let $v \colon \Qbar_p \to \Q \union \{\infty\}$ be the $p$-adic valuation, normalized by $v(p)=1$.
Note, if $\zeta$ is a root of unity, then $v(\zeta)=0$. Apply $v$ to both sides of \eqref{E:zeta} to get 
\begin{equation}
\label{E:valuation combination}
      \sum_{i<j} e_{ij} \, v(z_{ij}) = 0.
\end{equation}

By the theory of Newton polygons,
\[
   v(z_{ij}) = \begin{cases}
       0, &\textup{ if $v(2 \cos \alpha_{ij}) \ge 0$,} \\
       \pm v(2 \cos \alpha_{ij}),   &\textup{ if $v(2 \cos \alpha_{ij}) < 0$.} \\
   \end{cases}
\]
The value of $v(2\cos \alpha_{ij})$ is locally $p$-adically constant, at least in regions where $2 \cos \alpha_{ij}$ does not approach $0$.
Let $K \colonequals \Q(\sqrt{-2D}) = \Q( \textrm{i} V)$. 
By the double angle formula and \eqref{E:trig-aij}
\begin{equation}\label{E:z_ij^2}
    z_{ij}^2 = \cos (2\alpha_{ij})+\mathrm{i}\sin(2\alpha_{ij}) = \frac{2D_{ij}^2 - D_{ijk}D_{ijl}+2 e_{ij} D_{ij}\sqrt{-2D}}{D_{ijk} D_{ijl}}\in K.
\end{equation}

If $-2D$ is a square in $\Q_p$, then there are two ways to embed $K$ into $\Q_p$ 
(mapping $\sqrt{-2D}$ to either possible square root in $\Q_p$), 
and after choosing the embedding $K \injects \Q_p$, the $p$-adic valuation on $\Q_p$
restricts to give a $p$-adic valuation $v$ on $K$.
If $-2D$ is not a square in $\Q_p$, there is a unique extension of the $p$-adic valuation,
but $v(z_{ij}^2)=0$ for all $z_{ij}$, so \eqref{E:valuation combination} is trivial.

As a remark, we note that we lose nothing by passing to $p$-adic valuations: 
by Kronecker's Theorem, if \eqref{E:valuation combination} hold for all $p$-adic valuations $v$, then $\prod z_{ij}^{e_{ij}}$ is a root of unity. This means that $\sum e_{ij} \alpha_{ij} \in \Q\pi$, so the Dehn invariant is actually zero.

\subsection{The new approach}
This approach is suggested by Philipp Habegger and Gabriel Dill, whom we thank especially.
For simplicity, re-index the six edges simply by $\lbrace 1, 2, \dots, 6\rbrace$. Let $n_i\in \mathbb{N}$ be the corresponding rescaled edge lengths from Lemma~\ref{lem:case5-integral}, so that $z_1^{n_1} \cdots z_6^{n_6} = 1$.
However, the $z_i$'s are not rational functions of $n_1,\ldots,n_6$; instead they are algebraic functions.
One can still bound height $H(z_i)$ in terms of $N=\max(n_1,\ldots,n_6) \in \Z_{>0}$.
Arbitrarily order the prime ideals of  $K:=\mathbb{Q}(\sqrt{-2D})$ and let $A$ be the integer matrix whose $ij$-entry is the valuation of $z_i$ at the $j$-th prime ideal, so the $i$-th row of $A$ represents the ``factorization'' of $z_i$. Let $\vec{n} = (n_1, \dots, n_6)$, so $\vec{n}A=0$. As the $z_i$'s span a multiplicative group of rank $5$, $A$ has rank $5$, so we can take a $6 \times 6$ submatrix $B$ of $A$ which has rank $5$. 


As we shall see, some column of adjugate $B^*$ must be a nonzero multiple of $\vec{n}$, so $B^*$ contains an entry of absolute value at least $N$. On the other hand, the entries of $B$ are of absolute value $O(\log N)$, so the entries of $B^*$ are at most $O((\log N)^5)$. Hence, $N$ must be bounded. To make this approach precise, we make use of the following bound on determinants.

\begin{theorem}[Hadamard's Inequality \cite{hadamard1893resolution}]\label{hadamard}
If all entries of $A \in \mathbb{R}^{n\times n}$ have absolute value at most $a$, then $\lvert \det A \rvert \le (\sqrt{n} a)^n$. Moreover, let the rows or columns of $A$ be $\vec{v_1}, \dots \vec{v_n}$ and $\Vert \cdot\Vert_p$ be the $\ell_p$ norm on $\mathbb{R}^{n}$. Then
\[\left| \det A\right| \le \prod_{i=1}^{n} \Vert \vec{v_i} \Vert_{2}  \le \prod_{i=1}^{n} \Vert \vec{v_i} \Vert_{1}.\]
\end{theorem}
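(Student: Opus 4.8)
The plan is to reduce all three displayed bounds to the single core inequality $\lvert \det A\rvert \le \prod_{i=1}^{n} \Vert \vec{v_i}\Vert_2$, where $\vec{v_1},\dots,\vec{v_n}$ are taken to be the columns of $A$; the ``rows'' version then follows by applying this to $A^{\top}$, which has the same determinant. Granting the core inequality, the remaining two claims are routine: the elementary bound $\Vert \vec v\Vert_2 \le \Vert \vec v\Vert_1$ (expand $\Vert\vec v\Vert_1^2$ and drop the nonnegative cross terms) gives $\prod_i \Vert\vec{v_i}\Vert_2 \le \prod_i \Vert\vec{v_i}\Vert_1$, and if every entry of $A$ has absolute value at most $a$ then each column satisfies $\Vert \vec{v_i}\Vert_2 \le \sqrt{n}\,a$, whence $\lvert\det A\rvert \le (\sqrt{n}\,a)^{n}$.

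For the core inequality I would use Gram--Schmidt orthogonalization. If the columns are linearly dependent then $\det A = 0$ and there is nothing to prove, so assume they form a basis and let $\vec{w_1},\dots,\vec{w_n}$ be their Gram--Schmidt orthogonalization, i.e. $\vec{w_i} = \vec{v_i} - \sum_{j<i} c_{ij}\vec{w_j}$ for suitable scalars $c_{ij}$; equivalently $A = W U$, where $W$ has columns $\vec{w_i}$ and $U$ is upper triangular with unit diagonal. Then $\det A = \det W$, and since the $\vec{w_i}$ are pairwise orthogonal, $W^{\top}W$ is diagonal with entries $\Vert\vec{w_i}\Vert_2^{2}$, so $\lvert\det W\rvert = \prod_i \Vert\vec{w_i}\Vert_2$. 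Finally $\vec{w_i}$ is the orthogonal projection of $\vec{v_i}$ onto the orthogonal complement of $\operatorname{span}(\vec{w_1},\dots,\vec{w_{i-1}})$, hence $\Vert\vec{w_i}\Vert_2 \le \Vert\vec{v_i}\Vert_2$; multiplying over $i$ yields $\lvert\det A\rvert = \prod_i \Vert\vec{w_i}\Vert_2 \le \prod_i \Vert\vec{v_i}\Vert_2$.

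An alternative, avoiding Gram--Schmidt, is to pass to the Gram matrix $G := A^{\top}A$ of the columns, note $(\det A)^{2} = \det G$, and prove $\det G \le \prod_i G_{ii}$ for positive semidefinite $G$ by induction on $n$ using the Schur complement of $G_{11}$ (the case $n=1$ is trivial, and the inductive step uses that the Schur complement is again positive semidefinite, with diagonal entries no larger than the corresponding entries of $G$). There is no genuine obstacle here---Hadamard's inequality is classical---so the only points requiring care are applying the statement to the correct choice of rows versus columns via transposition, and disposing of the degenerate linearly dependent case separately.
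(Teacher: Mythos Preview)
Your argument is correct and is one of the standard proofs of Hadamard's inequality. Note, however, that the paper does not actually prove this theorem: it is stated with a citation to Hadamard's original 1893 paper and used as a black box in the proof of Theorem~\ref{main-case5}. So there is no ``paper's own proof'' to compare against; you have simply supplied a valid proof of a classical result that the authors took for granted.
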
 
\begin{proof}[Proof of Theorem~\ref{main-case5}]
By \eqref{E:z_ij^2}, $z_i^2 = \alpha_i/b_i$ for some $\alpha_i \in \Z[\sqrt{-2D}]$ and $b_i \in \Z$, so $\Norm(\alpha_i) = \Norm(b_i) = b_i^2$.
Define $\vec{n}$, $N$, $A$, and $B$ as before. Note that $\ker B^\intercal$ is spanned by $\vec{n}$. We claim that some column in the adjugate $B^*$ is a nonzero multiple of $\vec{n}$. As $B B^*=(\det B) I = 0$, any column of the $B^*$ spans $\ker B^\intercal$, so it is a multiple of $\vec{n}$. Since $B$ has rank $5$, at least one of its $5 \times 5$ minors is nonzero, so $B^*$ is not the zero matrix. Then, one of its columns is a nonzero multiple of $\vec{n}$, so some $\left|(B^*)_{k\ell}\right|\ge N$. By Theorem~\ref{hadamard}, for the $i$-th row $\vec{x}_i$ of $B$
\begin{equation}\label{E:N-bd}
N \le \left|(B^*)_{k\ell}\right| = |\det(\text{minor}_{k, \ell}(B))| \le \prod_{i=1}^{5} \|\vec{x}_i\|_1.
\end{equation}

For nonzero prime ideal $\pp$ of ring of integers $\calO$ of $K$, $\pp \intersect \Z$ is a prime ideal $(p)$ of $\Z$. 
The primes for which $v_p$ is nonzero must be splitting primes, and so the exponent of $\pp$ in the factorization of $w_i$ has absolute value bounded by $\max(v_p(b_i^2)), v_p(b_i)) = 2v_p(b_i)$. 
By \cite[Lemma 5]{wirth2014relations}, $|D_{ijk}| \le 3 N^4$, so $|b_i| \le 3^2 N^8$. Hence, for each row $i$ corresponding to prime $p_i$,
\[ \|\vec{x}_i\|_1\le \log_{p_i}(3^2N^8).\]
The right hand side of \eqref{E:N-bd} is maximized when the five primes $p_i$ associated to the rows of the minor of $B$ are $\{2,3,5,7,11\}$. Therefore
\[N \le \prod_{p \in \{2,3,5,7,11
\}} \ \ 2\log_{p}(3^2N^8) = \frac{32\log(3^2N^8)^5}{\log(2)\cdots \log(11)}.\]
A computation yields that maximum edge length $N\le 3.946 \times 10^{12}$, as desired.
\end{proof}
This shows that there are finitely many tetrahedra in Case 5, although it is not feasible to brute force this search space to find them.

\section{Known Dehn Invariant Zero Tetrahedra}
\label{DI0}

In this section, we discuss known Dehn invariant zero families, as well as sporadic cases. 

\subsection{Tetrahedral symmetries}
An obvious symmetry of a tetrahedron arises by permuting the vertex, i.e. the group action of $S_4$ on the labels $\{1,2,3,4\}$ gives rise to ``new'' tetrahedra with the same set of edge lengths and dihedral angles. This gives orbits of size at most $24$.

We also have the \emph{Regge symmetries}. The Regge symmetry fixing the first pair is defined via the following theorem with edges and dihedral angles listed in our conventional order. We can analogously obtain Regge symmetries fixing the second or last pairs. 

\begin{theorem}[Regge Symmetry \cite{akopyan2019regge}]\label{regge}
For a tetrahedron $T$ with edge lengths $(x, y, a, b, c, d)$, there is a tetrahedron $\bar{T}$ of the same volume with edge lengths $(x, y, s-a, s-b, s-c, s-d)$ where $s=(a+b+c+d)/2$. The map $T\mapsto \bar{T}$ is the \emph{Regge symmetry fixing the first pair}.

Moreover, the dihedral angles are mapped analogously: if the dihedral angles of $T$ are $(\theta, \phi, \alpha, \beta, \gamma, \delta)$, then those of $\bar{T}$ are $(\theta, \phi, \sigma-\alpha, \sigma-\beta, \sigma-\gamma, \sigma-\delta)$ where $\sigma = (\alpha+\beta+\gamma+\delta)/2$.
\end{theorem}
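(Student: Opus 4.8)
The plan is to push everything through the Cayley--Menger formalism of Proposition~\ref{prop-WD}. Write the six edge lengths in the conventional order, so the fixed pair is $(e_{12},e_{34})=(x,y)$ and the Regge substitution acts on $(e_{13},e_{24},e_{14},e_{23})=(a,b,c,d)$ by $(a,b,c,d)\mapsto(s-a,s-b,s-c,s-d)$ with $s=(a+b+c+d)/2$, each moving edge keeping its position. First I would record the formal features: this substitution is an involution and it preserves $a+b+c+d$, hence $s$. The computational heart is the claim that the Cayley--Menger determinant $D=\det M$ \emph{and} each face minor $D_{ijk}$ (the squared face area, up to the usual constant) is invariant under the substitution. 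Since $D$ and the $D_{ijk}$ are explicit polynomials in $x^2,y^2$ and in $a,b,c,d$, this is a finite symbolic verification. Granting it: $D=288V^2$ is unchanged, the $D_{ijk}>0$ are unchanged, so the transformed lengths satisfy the realizability conditions (phrased in terms of these same minors) and $\bar T$ is a genuine nondegenerate tetrahedron with $\Vol{\bar T}=\Vol{T}$.

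For the dihedral angles I would again use \eqref{E:trig-aij}. Along the two fixed edges, the step above shows the numerator $D_{12}$ (resp.\ $D_{34}$) and the product $D_{ijk}D_{ij\ell}$ in the denominator are invariant, as are $D$, $e_{12}$, $e_{34}$, so $\cos\alpha_{12},\sin\alpha_{12},\cos\alpha_{34},\sin\alpha_{34}$ are unchanged and $\theta,\phi$ are fixed. For the four moving edges the target $\bar\alpha_{ij}=\sigma-\alpha_{ij}$, with $\sigma=\tfrac12(\alpha+\beta+\gamma+\delta)$ the half-sum of the dihedral angles along those edges, is equivalent — once one knows $\alpha+\beta+\gamma+\delta$ is itself Regge-invariant, so $\sigma$ is well defined and preserved — to the pair $\cos\bar\alpha_{ij}=\cos(\sigma-\alpha_{ij})$, $\sin\bar\alpha_{ij}=\sin(\sigma-\alpha_{ij})$. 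The sum-invariance I would extract either from the same minor bookkeeping, or cheaply by continuity: $\alpha+\beta+\gamma+\delta$ and its Regge image are continuous on the connected nondegenerate locus and can differ only by a locally constant multiple of $2\pi$, so one example pins it down.

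The main obstacle is exactly this last step, since $\cos(\sigma-\alpha_{ij})=\cos\sigma\cos\alpha_{ij}+\sin\sigma\sin\alpha_{ij}$ is not a rational function of the individual cosines, and a head-on attack drags $\cos\sigma,\sin\sigma$ — themselves messy expressions in the minors of $M$ — through the whole computation. I expect the clean route is to pass to the angle Gram matrix $G$ of the outward face normals: the symmetric $4\times4$ matrix with $G_{kk}=1$ and $G_{k\ell}=-\cos\alpha_{ij}$ for $\{i,j\}=\{1,2,3,4\}\setminus\{k,\ell\}$, which for a Euclidean tetrahedron is positive semidefinite of rank $3$ with $\operatorname{adj}(G)$ proportional to the Gram matrix of the edge vectors. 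One then exhibits the dual Regge transformation on the entries of $G$ away from the fixed pair $\{G_{12},G_{34}\}$, checks via the duality $G\leftrightarrow\operatorname{adj}(G)$ that it corresponds to the edge-length substitution, and reads off the affine form $\alpha_{ij}\mapsto\sigma-\alpha_{ij}$ because the transformation acts by the spherical analogue of the edge substitution on the vertex-link spherical triangles (whose side lengths are face angles and whose angles are the dihedral angles at a vertex). Alternatively, having reduced the angle claim to one algebraic identity in the edge lengths via \eqref{E:trig-aij}, one may verify it on a Zariski-dense family and invoke continuity. Either way, matching the two indexings and keeping the signs straight is where the genuine work lies; the volume and fixed-angle assertions are routine once Cayley--Menger invariance is in hand.
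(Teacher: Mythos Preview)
The paper does not prove Theorem~\ref{regge}; it is stated as a cited result from \cite{akopyan2019regge}, so there is no in-paper proof to compare your attempt against.

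Your sketch is a reasonable outline of how such a proof goes---the invariance of $D$ and of the face minors $D_{ijk}$ under the Regge substitution is indeed the computational core, and the Gram-matrix/duality viewpoint you gesture at is close in spirit to how Akopyan and Izmestiev actually argue. That said, what you have written is explicitly a plan rather than a proof: the key identities (invariance of $D$, of the $D_{ijk}$, and the transformation law for the cofactors $D_{ij}$ along the four moving edges) are asserted as ``finite symbolic verifications'' but not carried out, and the angle claim for the moving edges is left at the level of ``I expect the clean route is\dots'' with two alternative strategies sketched but neither executed. In particular, the step from invariance of the minors to the precise formula $\bar\alpha_{ij}=\sigma-\alpha_{ij}$ (as opposed to some other affine map) is where the real content lies, and you correctly flag it as the main obstacle without resolving it. If you intend this to stand as a self-contained proof, those verifications need to actually be performed, or reduced to one explicit polynomial identity that you then check; as written it is a correct roadmap but not yet a proof.
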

\begin{corollary}\label{regge cor}
The set of three sums of opposite edge lengths and the set of three sum of three sums of opposite dihedral angles are invariant under Regge symmetries.
\end{corollary}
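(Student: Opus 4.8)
The plan is to verify the invariance directly from the explicit formulas in Theorem~\ref{regge}, checking it first for the Regge symmetry fixing the first pair and then deducing the general case by symmetry.

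First I would write the edge lengths as $(x,y,a,b,c,d)$ in our standing order $(12,34,13,24,14,23)$, so that the three pairs of opposite edges are exactly the three consecutive coordinate pairs, and the multiset of sums of opposite edge lengths is $\{x+y,\ a+b,\ c+d\}$. Under the Regge symmetry fixing the first pair, Theorem~\ref{regge} sends this to $(x,y,s-a,s-b,s-c,s-d)$ with $s=(a+b+c+d)/2$. I would then compute the three new sums: $x+y$ is unchanged; $(s-a)+(s-b)=2s-(a+b)=c+d$; and $(s-c)+(s-d)=2s-(c+d)=a+b$. Hence $\{x+y,a+b,c+d\}$ is carried to $\{x+y,c+d,a+b\}$, i.e.\ the multiset is preserved (the two non-fixed sums are simply interchanged). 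The identical two-line computation, with $\sigma=(\alpha+\beta+\gamma+\delta)/2$ in place of $s$ applied to the dihedral angles $(\theta,\phi,\alpha,\beta,\gamma,\delta)$, shows that the multiset $\{\theta+\phi,\ \alpha+\beta,\ \gamma+\delta\}$ of sums of opposite dihedral angles is preserved as well.

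For the Regge symmetries fixing the second or the last pair, the defining transformation is the same formula after relabeling which pair plays the role of the ``first'' pair, so the same computation applies verbatim; each of the three generating Regge symmetries therefore preserves both multisets. Since every Regge symmetry is a composition of these three, and a composition of maps each preserving a given multiset again preserves it, both multisets are invariant under the full group of Regge symmetries, which is the assertion. I do not expect a genuine obstacle here — the computation is elementary; the only point needing care is keeping the ordering convention consistent so that ``opposite pairs'' really do correspond to the consecutive coordinate pairs in each of the three cases.
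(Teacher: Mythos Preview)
Your argument is correct and follows essentially the same approach as the paper: both verify the invariance for the Regge symmetry fixing the first pair by computing $(s-a)+(s-b)=c+d$ and $(s-c)+(s-d)=a+b$, then invoke symmetry to handle the other two generators and the dihedral-angle case. Your version is slightly more detailed (you explicitly note the swap of the two non-fixed sums and the closure under composition), but there is no substantive difference.
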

\begin{proof}
By symmetry, we check the Regge symmetry fixing the first pair. For $s=({a+b+c+d})/{2}$, we have $(s-a)+(s-b)=2s-a-b = c+d$ and $(s-c)+(s-d)=2s-c-d=a+b$. Analogous statements hold for other Regge symmetries and for dihedral angles.
\end{proof}

\subsection{The one-parameter families}
The authors are aware of four distinct infinite families of Dehn invariant zero tetrahedra up to Regge symmetries. Three were known previously as the Hill Families $H_{1}, H_{2}$, and $H_{3}$, which are discovered in \cite{MR1576480} and listed in \cite{boltianskiui1979hilbert}. The following lemma shows that they are still one-parameter families when restricted to tetrahedra with integer edge lengths. The
proof is deferred to Section~\ref{valid-sec}.

\begin{lemma}\label{hill-valid}
Each Hill family contains infinitely many tetrahedra with integer edge lengths. 
\end{lemma}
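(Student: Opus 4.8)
The plan is to start from the explicit parametrizations of the Hill families $H_1,H_2,H_3$ recorded in \cite{boltianskiui1979hilbert}. Each $H_i$ is a one-parameter family, and its similarity classes can be indexed by a real parameter $c$ over an open interval so that, after fixing a representative of each class at a convenient scale, every one of the six squared edge lengths is an affine function of $c$ with rational coefficients. For concreteness, realize $H_1$ with vertices $0,v_1,v_1+v_2,v_1+v_2+v_3$ where $v_1,v_2,v_3$ are unit vectors making pairwise angle $\alpha$ and $c=\cos\alpha$; then in the order $(12,34,13,24,14,23)$ the six squared edge lengths are $1,1,2+2c,2+2c,3+6c,1$, with parameter range $c\in(-1/2,1)$. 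It therefore suffices to produce infinitely many rational $c$ in the admissible interval at which every squared edge length is the square of a rational number: for such $c$ all six edge lengths are rational, and clearing a common denominator yields a member of $H_i$ with integer edge lengths.

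The key step is to reduce the condition ``every squared edge length is a rational square'' to finding rational points on a curve. Since a Hill tetrahedron has many equal edges, only a bounded number of essentially distinct nonconstant conditions arise, and together they cut out an explicit affine curve $C_i$; because each squared edge length is affine in $c$, eliminating $c$ presents $C_i$ as a plane conic. For $H_1$, eliminating $c$ from $y^2=2+2c$ and $z^2=3+6c$ gives the conic $z^2=3y^2-3$, and the collapsed tetrahedron at $c=1$ supplies the rational point $(y,z)=(2,3)$. A conic with a rational point is rational, so $C_i(\Q)$ is infinite; one then checks for each family that the rational parametrization of $C_i$ passes through an open arc of admissible $c$ (for $H_1$, the point with $(y,z)=(14/13,9/13)$ lies in this arc and, after scaling, gives the integer-edged $H_1$ tetrahedron $13,13,14,14,9,13$), so that infinitely many rational points of $C_i$ have admissible $c$.

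Finally I would discard degenerate specializations. For all but finitely many of these rational $c$ the Cayley-Menger determinant $D$ of Proposition~\ref{prop-WD} is nonzero, since $D$ is a fixed nonzero polynomial in $c$ (the family consists of honest tetrahedra), so the integer-edged configuration obtained at such $c$ is a genuine nondegenerate tetrahedron; and distinct admissible $c$ give non-similar tetrahedra because the dihedral angles vary with $c$. This produces infinitely many integer-edged tetrahedra in each Hill family. I expect the only real obstacle to be the case analysis: each of $H_1,H_2,H_3$ has its own squared edge lengths, admissible interval, and conic $C_i$, and one must verify in each case that $C_i$ is a rational conic carrying infinitely many rational points with $c$ in the admissible range — routine once the explicit parametrizations are in hand.
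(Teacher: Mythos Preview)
Your argument is fine for $H_1$, and in fact matches the paper's treatment there (the paper uses $x=\sin\alpha$ instead of $c=\cos\alpha$, but the resulting conic is the same up to a change of variable). The gap is in your blanket claim that ``eliminating $c$ presents $C_i$ as a plane conic'' for every $i$. This is false for $H_2$ and $H_3$.

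Take $H_2$ with parameter $u=\sin^2\alpha$. The squared edge lengths $4u,\ 5u-1,\ 3(1-u),\ 4u,\ 4,\ 5u-1$ are indeed affine in $u$, but there are \emph{three} essentially distinct nonconstant conditions: $u$, $3(1-u)$, and $5u-1$ must each be a rational square. Eliminating $u$ leaves two equations, $y^2=3(1-x^2)$ and $z^2=5x^2-1$, cutting out an intersection of two conics in $\mathbb{A}^3$ --- a curve of genus~$1$, not a conic. The same happens for $H_3$, where (with the same $u$) one needs $u$, $u+2$, and $12(1-u)$ to be rational squares simultaneously. A genus-one curve with a rational point is an elliptic curve, and such a curve can perfectly well have only finitely many rational points; your ``rational conic'' argument does not apply.

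The paper confronts exactly this issue: for $H_2$ it parametrizes the first conic, substitutes into the second, obtains the quartic $w^2=t^4-9t^2+9$, and then checks (via \textsc{Magma}) that this curve is birational to an elliptic curve $E$ of rank~$1$, hence has infinitely many rational points; $H_3$ is handled analogously. So the missing ingredient in your proposal is precisely this rank computation --- without it, the argument for $H_2$ and $H_3$ does not go through.
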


The fourth ``new'' family has been recently discovered by the authors. In the table below, we list all of them them up to Regge and $S_4$ symmetries. We also compute the dimension of the span of the dihedral angles over $\Q$. We let real numbers $t>4$ and $\alpha \in (0, \pi)$ be the free parameters of each one-parameter family.


{\scriptsize \begin{table}[h]
\centering
\caption{All known one-parameter families up to Regge and $S_4$ symmetries}
\begin{tabular}{ |c|c|c|c| }
\hline
Family &Dihedral Angles &  Edge Lengths & Dimension\\
\hline
$H_{1}$ & $(\alpha, \alpha, \pi/3, \pi-2\alpha, \pi/2, \pi/2)$ & $(\sin\alpha, \sin\alpha, \sqrt{3}\cos\alpha, \sin\alpha, 1, 1)$ & $1$ \\
\hline
$H_{2}$  & $(\alpha, \beta, \pi/3, \pi/2-\alpha, \pi/2, \pi-\beta) $& $\left(2\sin\alpha, \sqrt{5\sin^2\alpha - 1}, \sqrt{3}\cos\alpha, 2\sin\alpha, 2, \sqrt{5\sin^2\alpha-1}\right)$ & $2$\\
\hline 
$H_{3}$  & $(\alpha, \gamma, \pi/6, \pi-2\alpha, \pi-\gamma, \pi/2) $ & $\left(2\sin\alpha, \sqrt{\sin^2\alpha +2}, \sqrt{12}\cos\alpha, \sin\alpha, \sqrt{\sin^2\alpha+2}, 2\right)$ & $2$ \\ 
\hline 
New & See Table 2 & $(t+1, t-1, t+1, t-1, t, 6)$ & $2$
\\ 
\hline
\end{tabular}
\end{table}}

\begin{remark}
The last column describes the generic dimension of the family.    
\end{remark}
In the following table, we list out each Regge orbit, thereby obtaining a complete list of known families up $S_4$ symmetry.

\begin{landscape}

{\tiny

\begin{table}[h]
    \centering
    \caption{All known one-parameter families up to $S_{4}$ symmetry}
    \label{up to S4} 
    \begin{tabular}{|c|c|c|}
    \hline   
Family & Dihedral Angles &  Edge Lengths \\ \hline

\m{3}{$H_{1}$}  &         $\left( \alpha,   \alpha,  \frac{\pi}{3},  \pi - 2 \alpha,  \frac{\pi}{2},  \frac{\pi}{2}\right)$  & $\left( s, s, s,  \sqrt{3} c, 1,   1\right)$ \\ \cline{2-3} & 
         $\left( - \alpha + \frac{2 \pi}{3}, - \alpha + \frac{2 \pi}{3}, \alpha + \frac{\pi}{6}, - \alpha + \frac{5 \pi}{6},   \alpha,  \alpha\right)$ & $\left( s,   s,  \frac{s-\sqrt{3} c}{2} + 1,   \frac{s-\sqrt{3} c}{2} + 1,   \frac{s+\sqrt{3} c}{2},   \frac{s+\sqrt{3} c}{2}\right)$ \\ \cline{2-3} & 
          $\left( - \alpha + \frac{2 \pi}{3}, - \alpha + \frac{2 \pi}{3}, 2 \alpha - \frac{\pi}{3}, \frac{\pi}{3}, \frac{\pi}{2}, \frac{\pi}{2}\right)$ & $\left( \frac{s+\sqrt{3} c}{2}, \frac{s+\sqrt{3} c}{2}, \frac{3 s-\sqrt{3} c}{2}, \frac{s+\sqrt{3} c}{2}, 1, 1\right)$
       \\  \hline

\m{6}{$H_{2}$} & $\left( \beta,  \alpha,  \frac{\alpha}{2} - \frac{\beta}{2} + \frac{2 \pi}{3},  - \frac{\alpha}{2} - \frac{\beta}{2} + \frac{5 \pi}{6},  - \frac{\alpha}{2} - \frac{\beta}{2} + \frac{2 \pi}{3},  - \frac{\alpha}{2} + \frac{\beta}{2} + \frac{\pi}{6} \right)$ 
        &  
$\left( 2s,  f,  \dfrac{f}{2} + s - \frac{\sqrt{3} c}{2} + 1, \frac{f}{2} - s + \frac{\sqrt{3} c}{2} + 1 ,\right. \quad \left. \frac{f}{2} + s + \frac{\sqrt{3} c}{2} - 1,  - \frac{f}{2} + s + \frac{\sqrt{3} c}{2} + 1\right)$  \\ \cline{2-3} & 
        
        $\left( \frac{\alpha}{2} - \beta + \frac{3 \pi}{4}, - \frac{\alpha}{2} + \frac{3 \pi}{4}, \alpha + \frac{\beta}{2} - \frac{\pi}{12}, \frac{\beta}{2} + \frac{\pi}{12}, - \frac{\alpha}{2} + \frac{\beta}{2} + \frac{\pi}{6}, - \frac{\alpha}{2} - \frac{\beta}{2} + \frac{2 \pi}{3}\right)$ 
        
        & 
        
        $ \left( f - s + 1, s + 1, \frac{f}{2} + 2 s - \frac{\sqrt{3} c}{2},\frac{f}{2} + \frac{\sqrt{3} c}{2}, - \frac{f}{2} + s + \frac{\sqrt{3} c}{2} + 1, \frac{f}{2} + s + \frac{\sqrt{3} c}{2} - 1\right)$ 

        \\ \cline{2-3} &

        $\left( - \frac{\alpha}{2} + \frac{3 \pi}{4}, \frac{\alpha}{2} - \beta + \frac{3 \pi}{4}, - \alpha + \frac{\pi}{2}, \frac{\pi}{3}, \frac{\alpha}{2} + \beta - \frac{\pi}{4}, \frac{\alpha}{2} + \frac{\pi}{4}\right)$

        & $\left( s + 1, f - s + 1, 2 s, \sqrt{3} c, f + s - 1, s + 1\right)$

        \\ \cline{2-3} & 

    $\left( - \frac{\beta}{2} + \frac{5 \pi}{12}, - \alpha + \frac{\beta}{2} + \frac{5 \pi}{12}, \frac{\beta}{2} + \frac{\pi}{12}, \alpha + \frac{\beta}{2} - \frac{\pi}{12}, \pi - \beta, \frac{\pi}{2}\right)$

    &
    
    $\left( \frac{f}{2} + \frac{\sqrt{3} c}{2}, - \frac{f}{2} + 2 s + \frac{\sqrt{3} c}{2}, \frac{f}{2} + \frac{\sqrt{3} c}{2}, \frac{f}{2} + 2 s - \frac{\sqrt{3} c}{2}, f, 2\right)$ 
    
    \\ \cline{2-3} & 

    $\left( \alpha, \beta, - \alpha + \frac{\pi}{2}, \frac{\pi}{3}, \pi - \beta, \frac{\pi}{2}\right) $

    & 

    $\left( 2 s, f, 2 s, \sqrt{3} c, f, 2\right)$

    \\ \cline{2-3} & 

    $\left( - \frac{\beta}{2} + \frac{5 \pi}{12}, - \alpha + \frac{\beta}{2} + \frac{5 \pi}{12}, \frac{\alpha}{2} - \frac{\beta}{2} + \frac{2 \pi}{3}, - \frac{\alpha}{2} - \frac{\beta}{2} + \frac{5 \pi}{6}, \frac{\alpha}{2} + \beta - \frac{\pi}{4}, \frac{\alpha}{2} + \frac{\pi}{4}\right)$
&$\left( - \frac{f}{2} + 2 s + \frac{\sqrt{3} c}{2}, \frac{f}{2} + \frac{\sqrt{3} c}{2}, \frac{f}{2} + s - \frac{\sqrt{3} c}{2} + 1,\frac{f}{2} - s + \frac{\sqrt{3} c}{2} + 1, f + s - 1, s + 1\right) $
        \\ \hline
\m{6}{$H_{3}$} & $\left( - \frac{3 \alpha}{2} + \frac{\gamma}{2} + \frac{7 \pi}{12}, - \frac{\alpha}{2} - \frac{\gamma}{2} + \frac{7 \pi}{12}, - \frac{\alpha}{2} + \frac{\gamma}{2} + \frac{5 \pi}{12}, \frac{3 \alpha}{2} + \frac{\gamma}{2} - \frac{5 \pi}{12}, \pi - \gamma, \frac{\pi}{2}\right)$ &
$\left( - \frac{r}{2} + \frac{3 s}{2} + \sqrt{3} c, \frac{r}{2} - \frac{s}{2} + \sqrt{3} c, \frac{r}{2} + \frac{3 s}{2} - \sqrt{3} c, \frac{r}{2} + \frac{s}{2} + \sqrt{3} c, 2, r\right)$

\\ \cline{2-3} &

$\left( \frac{\alpha}{2} - \gamma + \frac{3 \pi}{4}, - \frac{\alpha}{2} + \frac{3 \pi}{4}, \pi - 2 \alpha, \frac{\pi}{6}, \frac{\alpha}{2} + \gamma - \frac{\pi}{4}, \frac{\alpha}{2} + \frac{\pi}{4}\right)$&$\left( s + 1, r - s + 1, s, 2 \sqrt{3} c, s + 1, r + s - 1\right)$

\\ \cline{2-3} &

$\left( \frac{\alpha}{2} - \gamma + \frac{3 \pi}{4}, - \frac{\alpha}{2} + \frac{3 \pi}{4}, \frac{3 \alpha}{2} + \frac{\gamma}{2} - \frac{5 \pi}{12}, - \frac{\alpha}{2} + \frac{\gamma}{2} + \frac{5 \pi}{12}, - \alpha - \frac{\gamma}{2} + \frac{5 \pi}{6}, - \alpha + \frac{\gamma}{2} + \frac{\pi}{3}\right)
$&$
\left( r - s + 1, s + 1, \frac{r}{2} + \frac{3 s}{2} - \sqrt{3} c, \frac{r}{2} + \frac{s}{2} + \sqrt{3} c, \frac{r}{2} + \frac{s}{2} + \sqrt{3} c - 1, - \frac{r}{2} + \frac{s}{2} + \sqrt{3} c + 1\right)
$

\\ \cline{2-3} &

$\left( \alpha, \gamma, \alpha - \frac{\gamma}{2} + \frac{\pi}{3}, - \alpha - \frac{\gamma}{2} + \frac{7 \pi}{6}, - \alpha - \frac{\gamma}{2} + \frac{5 \pi}{6}, - \alpha + \frac{\gamma}{2} + \frac{\pi}{3}\right)$&

$\left( r, 2 s, \frac{r}{2} + \frac{s}{2} - \sqrt{3} c + 1, \frac{r}{2} - \frac{s}{2} + \sqrt{3} c + 1, \frac{r}{2} + \frac{s}{2} + \sqrt{3} c - 1, - \frac{r}{2} + \frac{s}{2} + \sqrt{3} c + 1\right)
$

\\ \cline{2-3} &

$\left( \alpha, \gamma, \frac{\pi}{6}, \pi - 2 \alpha, \pi - \gamma, \frac{\pi}{2}\right)$&$\left( 2 s, r, 2 \sqrt{3} c, s, r, 2\right)$

\\ \cline{2-3} &

$\left( - \frac{\alpha}{2} - \frac{\gamma}{2} + \frac{7 \pi}{12}, - \frac{3 \alpha}{2} + \frac{\gamma}{2} + \frac{7 \pi}{12}, \alpha - \frac{\gamma}{2} + \frac{\pi}{3},  - \alpha - \frac{\gamma}{2} + \frac{7 \pi}{6}, \frac{\alpha}{2} + \frac{\pi}{4}, \frac{\alpha}{2} + \gamma - \frac{\pi}{4}\right)  
 $&$\left( \frac{r}{2} - \frac{s}{2} + \sqrt{3} c, - \frac{r}{2} + \frac{3 s}{2} + \sqrt{3} c, \frac{r}{2} - \frac{s}{2} + \sqrt{3} c + 1, \frac{r}{2} + \frac{s}{2} - \sqrt{3} c + 1, s + 1, r + s - 1\right)$
   \\ \hline
\m{3}{New}&$\begin{array}{l}
              \left(\arccos \frac{\sqrt{3(t+4)}}{(t-2)\sqrt{t+2}}, \arccos \frac{\sqrt{3(t-4)}}{(t+2)\sqrt{t-2}}, \arccos \frac{\sqrt{3(t+4)}}{(t-2)\sqrt{t+2}}, \right. \\
              \quad \quad  
              \left. \arccos \frac{\sqrt{3(t-4)}}{(t+2)\sqrt{t-2}}, \arccos \frac{t^{2}-28}{(t-2)(t+2)}, \arccos \frac{\sqrt{(t-4)(t+4)}}{2\sqrt{(t-2)(t+2)}}\right) 
         \end{array}$ & $(t+1, t-1, t+1, t-1, t, 6)$ \\ \cline{2-3}
&$\begin{array}{cc}
             \left( \arccos \frac{\sqrt{3(t+4)}}{(t-2)\sqrt{t+2}}, \arccos \frac{\sqrt{3(t-4)}}{(t+2)\sqrt{t-2}}, s-\arccos \frac{\sqrt{3(t+4)}}{(t-2)\sqrt{t+2}}, \right. \\ 
            \quad \quad \left. u-\arccos \frac{\sqrt{3(t-4)}}{(t+2)\sqrt{t-2}}, u-\arccos \frac{t^{2}-28}{(t-2)(t+2)}, u-\arccos \frac{\sqrt{(t-4)(t+4)}}{2\sqrt{(t-2)(t+2)}}\right)
        \end{array}$ & $\left(t+1, t-1,\frac{t}{2}+2, \frac{t}{2}+4, \frac{t}{2}+3, \frac{3t}{2}-3\right)$ \\ \hline
    \end{tabular}
\end{table}}
\vspace{-1 ex}
where we use following variables that are functions of free parameters $t$ and/or $\alpha$:
{\small \begin{itemize}
\item $s=\sin \alpha$,
\item $c=\cos \alpha $,
\item $\beta = \arccos  \left(\frac{1}{2}\cot \alpha\right)$,
\item $\gamma = \arccos \left(\frac{1}{\sqrt{3}}\cos\alpha\right)$,
\item $r=\sqrt{\sin^{2}{\alpha} + 2}$,
\item $f=\sqrt{5 \sin^{2}{\alpha} - 1}$,
\item $u = \frac{1}{2} \left[\arccos \left(\frac{\sqrt{3(t+4)}}{(t-2)\sqrt{t+2}}\right)+ \arccos \left(\frac{\sqrt{3(t-4)}}{(t+2)\sqrt{t-2}}\right)+
\arccos \left(\frac{t^{2}-28}{(t-2)(t+2)}\right)+
\arccos \left(\frac{\sqrt{(t-4)(t+4)}}{2\sqrt{(t-2)(t+2)}}\right)\right]$.
\end{itemize}}
\end{landscape}

\subsection{Isolated examples}
Since \cite{kedlaya2020space} follows the same notation, we redirect the reader there for the $59$ sporadic rational tetrahedra found in the appendix. These tetrahedra all have trivial span of dihedral angles over $\Q$, since all dihedral angles are rational multiples of $\pi$.
We list the examples in \cite{boltianskiui1979hilbert} not in \cite{kedlaya2020space} into the same form. They are hence all of dimension at least $1$. We group them by Regge equivalence and compute dimensions.
{\scriptsize \begin{table}[h]
\centering
\caption{Known isolated examples of Dehn invariant zero tetrahedra in \cite{boltianskiui1979hilbert} but not in \cite{kedlaya2020space}}
\begin{tabular}{ |c|c|c| }
\hline
Edge Lengths & Dihedral Angles & $\dim$ \\
\hline
$(\sqrt[4]{5}\sqrt{\tau}, \sqrt{7}/2, \sqrt{3}, \sqrt{3}/2, \sqrt{7}/2, \sqrt[4]{5}/\sqrt{\tau})$ & $(3\pi/5, \pi - \alpha_{1}, \pi/6, \pi/3, \alpha_{1}, \pi/5)$ & 1 \\
\hline
$(\sqrt[4]{5}\sqrt{\tau}, \sqrt{7+3\tau}/2, \sqrt{3}, \sqrt{7+3\tau}/2, \sqrt[4]{5}/\sqrt{\tau})$ & $(3\pi/5, \pi - \alpha_{2}, \pi/3, \alpha_{2}, \pi/5, \pi/10)$ & 1 \\
$(\sqrt{2+\tau}, 2\sqrt{18 - 11\tau}, 2\sqrt{6 - 3\tau}, \sqrt{7+3\tau}/\tau^2, 2\sqrt{5} - 2, \sqrt{7+3\tau}/\tau^2)$ & $(\pi/5, 3\pi/10, \pi/3, \alpha_{2}, \pi/2, \pi - \alpha_{2})$ & 1 \\
\hline 
$(\sqrt[4]{5}\sqrt{\tau}, \sqrt[4]{5}\sqrt{\tau/2}, \sqrt{3}, \sqrt{7-3/\tau}/2, \sqrt{7-3/\tau}/2, \sqrt[4]{5}/\sqrt{\tau})$ & $(3\pi/10, 3\pi/5, \pi/3, \pi - \alpha_{3}, \alpha_{3}, \pi/5)$ & 1 \\
$(2\sqrt{2+\tau}, \sqrt{18 - 11\tau}, 2\sqrt{6 - 3\tau}, \sqrt{10 - 3\tau}, \sqrt{10 - 3\tau}, 2\sqrt{5} - 2)$ & $(\pi/10, 3\pi/5, \pi/3, \alpha_{3}, \pi - \alpha_{3}, \pi/2)$ & 1 \\
\hline 
$(\sqrt[4]{5}\sqrt{\tau}, 1, \sqrt{3}, 1, 1, \sqrt[4]{5}/\sqrt{\tau})$ & $(3\pi/10, \alpha_{6}, \pi/6, \alpha_{5}, \alpha_{4}, \pi/10)$ & 2 \\
\hline 
$(\sqrt{3}, \sqrt{5}/2, \sqrt{3}, \sqrt{5}/2, \sqrt{5}/2, 2)$ & $(\pi/6, \pi - \alpha_{7}/2, \pi/6, \pi - \alpha_{7}/2, \alpha_{7}, \pi/4)$ & 1 \\

\hline 
$(2\sqrt{3}(\tau - 1), \sqrt{2+\tau}, 2\sqrt{3 - \tau}, \sqrt{10 - 3\tau}, \sqrt{10 - 3\tau}, 2\sqrt{3})$ & $(\pi/6, \pi/5, \pi/5, \alpha_{8}, \pi - \alpha_{8}, 2\pi/3)$ & 1 \\
\hline 
$(2\sqrt{2+\tau}, \sqrt{3}(\tau - 1), 2\sqrt{3 - \tau}, \sqrt{6 + \tau}, \sqrt{6+\tau}, 2\sqrt{3})$ & $(\pi/10, \pi/3, \pi/5, \alpha_{9}, \pi - \alpha_{9}, 2\pi/3)$ & 1 \\
\hline 
$(\tau\sqrt{3}, 2\sqrt{3 - \tau}, 2\sqrt{3}, \sqrt{7-\tau}, 2\sqrt{2+\tau}, \sqrt{7-\tau})$ & $(\pi/3, 3\pi/10, \pi/3, \alpha_{10}, 2\pi/5, \pi - \alpha_{10})$ & 1 \\
\hline 
$(\sqrt{3}, \sqrt{7-4\tau}/2, \sqrt{6-3\tau}, \sqrt{11-4\tau}/2, \sqrt{11-4\tau}/2, \sqrt{3-\tau})$ & $(\pi/6, 3\pi/5, \pi/3, \alpha_{11}, \pi - \alpha_{11}, 2\pi/5)$ &  1\\
\hline 
$(\sqrt{6-3\tau}, \sqrt{7-4\tau}, \sqrt{3}, \sqrt{6-3\tau}, 2\sqrt{4-2\tau}, \sqrt{3-\tau})$ & $(\alpha_{12}, \pi/5, \pi/3, 2\pi/3 - \alpha_{12}, \pi/2, 3\pi/5)$ & 1 \\
$(\sqrt{3}, \sqrt{3}(\tau - 1), \sqrt{3}, \sqrt{2+\tau}, \sqrt{3-\tau}, 2\sqrt{2})$ & $(\pi/3 - \alpha_{12}, \pi/3, \alpha_{12}, 2\pi/5, 4\pi/5, \pi/2)$ & 1 \\
\hline
\end{tabular}
\end{table}}

where length and angle constants are
{\small
\begin{multicols}{2}
\begin{itemize}
    \item $\tau=(\sqrt{5}+1)/2$,
    \item $u=\tau^2/(2\sqrt{3})$,
    \item $v=\tau^{3/2}/(2\sqrt[4]{5})$,
    \item $w=\sqrt{11+16\tau}$,
    \item $z=1/(2\sqrt{3}\tau^2)$,
    \item $\alpha_1=\arctan\sqrt{7/5}$,
    \item $\alpha_2=\arctan\sqrt{9-2\sqrt{5}}$,
    \item $\alpha_3=\arctan\sqrt{9+2\sqrt{5}}$,
    \item $\alpha_4=\pi - \arctan\sqrt{9+2\sqrt{5}}$,
    \item $\alpha_5=\pi - \arctan 2$,
    \item $\alpha_6=2\pi - \alpha_4-\alpha_5$,
    \item $\alpha_7=\pi - \arccos \left(2/3\right)$,
    \item $\alpha_8= \arccos u$,
    \item $\alpha_9= \arccos v$,
    \item $\alpha_{10} = \arctan w$,
    \item $\alpha_{11} = \arccos z$,
    \item $\alpha_{12}= \arctan\sqrt{3/5}$.
\end{itemize}
\end{multicols}}


\section{Properties of the One-parameter Families}
In this section, we show Lemma~\ref{hill-valid} and Theorem~\ref{new-fam} about the one-parameter families.
\subsection{Validity of the one-parameter families}\label{valid-sec}
We show that the three Hill families and the two new families $T$ and $T'$ from \eqref{E:TT'} are one-parameter families of Dehn invariant zero tetrahedra with integer edge lengths. Namely, we prove Lemma~\ref{hill-valid} and Theorem~\ref{new-fam}(1).
\begin{lemma}\label{H1-valid}
$H_{1}$ contains infinitely many tetrahedra with integer edge lengths. 
\end{lemma}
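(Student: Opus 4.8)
The plan is to exhibit an explicit infinite subfamily by finding a value of the parameter $\alpha$ for which all six edge lengths become (rational, hence after scaling) integers. Recall from Table~1 that $H_1$ has edge lengths $(\sin\alpha, \sin\alpha, \sqrt{3}\cos\alpha, \sin\alpha, 1, 1)$. Scaling by any positive real preserves both the similarity class and the property of being in $H_1$, so it suffices to produce infinitely many $\alpha\in(0,\pi)$ for which the ratios of the six lengths are all rational; equivalently, for which $\sin\alpha\in\Q$ and $\sqrt{3}\cos\alpha/\sin\alpha\in\Q$, i.e. $\cos\alpha\in\sqrt{3}\,\Q$ and $\sin\alpha\in\Q$.

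First I would parametrize: write $\sin\alpha = 2m n/(m^2+n^2)$ and $\cos\alpha = (m^2-n^2)/(m^2+n^2)$ for coprime integers $m>n>0$ — the usual rational-point parametrization of the unit circle. Then $\sin\alpha\in\Q$ automatically, so the only remaining constraint is $\sqrt{3}\cos\alpha \in \Q\cdot\sin\alpha$; but wait — $\sqrt{3}\cos\alpha$ need only be a rational multiple of the common scale, and since $1$ is among the edge lengths, the scale is forced and we actually need $\sqrt{3}\cos\alpha\in\Q$, which fails for rational $\cos\alpha\neq 0$. So instead I would scale the whole tetrahedron by $1/\cos\alpha$ (assuming $\cos\alpha\neq 0$), giving edge lengths $(\tan\alpha,\tan\alpha,\sqrt 3,\tan\alpha,\sec\alpha,\sec\alpha)$, and now choose $\alpha$ so that $\tan\alpha$ and $\sec\alpha$ both lie in $\sqrt{3}\,\Q$; then multiplying through by $1/\sqrt 3$ clears everything. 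Concretely, set $\tan\alpha = k\sqrt 3$ for $k\in\Q_{>0}$: then $\sec^2\alpha = 1+3k^2$, so I need $1+3k^2$ to be a perfect square of a rational. Writing $k = p/q$, this is the Pell-type condition $q^2+3p^2 = r^2$, which has infinitely many integer solutions $(p,q,r)$ (e.g. generated from $(1,1,2)$, or simply $p=2t$, $q=t^2-3$, $r=t^2+3$ for integer $t>\sqrt 3$, giving $q^2+3p^2 = (t^2+3)^2$). Each such solution yields a distinct $\alpha\in(0,\pi/2)$ and hence, after clearing denominators, a tetrahedron in $H_1$ with integer edge lengths.

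The last step is to check these are genuinely distinct tetrahedra — i.e. not all similar to one another — which follows because distinct values of $\tan\alpha$ give distinct similarity classes (the dihedral angle $\alpha$ is a similarity invariant), and the infinitely many parameter values above give infinitely many distinct values of $\alpha$. I would also briefly note nondegeneracy: for $\alpha\in(0,\pi/2)$ the Hill tetrahedron $H_1$ is genuinely nondegenerate (this is classical, from \cite{MR1576480}), so no extra work is needed there.

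The main obstacle is purely bookkeeping: identifying the correct rescaling so that $\sqrt 3$ is absorbed uniformly into all six edges, after which the existence of infinitely many integer points on the conic $q^2+3p^2=r^2$ is elementary. There is no deep arithmetic — the whole content is the observation that $H_1$'s shape is governed by the single quantity $\tan\alpha$, and that the constraint "$\sqrt 3\cos\alpha$ and $1$ are commensurable with the rest" reduces to a conic with infinitely many rational points.
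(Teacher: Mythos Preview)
Your overall strategy---reduce to finding infinitely many rational points on a conic---is exactly right and matches the paper's approach. But there is a genuine algebraic slip that breaks the argument as written. You correctly state that after scaling by $1/\cos\alpha$ you need both $\tan\alpha$ and $\sec\alpha$ to lie in $\sqrt{3}\,\Q$. However, when you set $\tan\alpha = k\sqrt{3}$ and compute $\sec^2\alpha = 1+3k^2$, you then require $1+3k^2$ to be a rational square. That forces $\sec\alpha\in\Q$, not $\sec\alpha\in\sqrt{3}\,\Q$. With your explicit solutions $p=2t$, $q=t^2-3$, $r=t^2+3$, one gets $\tan\alpha = \tfrac{2t\sqrt{3}}{t^2-3}$ and $\sec\alpha = \tfrac{t^2+3}{t^2-3}$, so the scaled edge tuple is
\[
\Bigl(\tfrac{2t\sqrt{3}}{t^2-3},\,\tfrac{2t\sqrt{3}}{t^2-3},\,\sqrt{3},\,\tfrac{2t\sqrt{3}}{t^2-3},\,\tfrac{t^2+3}{t^2-3},\,\tfrac{t^2+3}{t^2-3}\Bigr),
\]
with four entries in $\sqrt{3}\,\Q$ and two in $\Q$. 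No single rescaling makes all six rational, so these tetrahedra do \emph{not} have commensurable edge lengths.

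The fix is easy: the correct condition is $1+3k^2\in 3\,\Q^2$, which (writing $k=p/q$ and clearing) becomes $q^2+3p^2=3s^2$; reducing mod $3$ forces $3\mid q$, and after substituting $q=3q'$ one lands on $p^2+3q'^2=s^2$---the same shape of conic, still with infinitely many integer solutions. Alternatively, and more directly, do what the paper does: skip the $1/\cos\alpha$ rescaling and simply require $\sin\alpha\in\Q$ and $\sqrt{3}\cos\alpha\in\Q$, which is the conic $3(1-x^2)=y^2$ with the standard rational parametrization $(x,y)=\bigl(\tfrac{3-t^2}{3+t^2},\tfrac{6t}{3+t^2}\bigr)$.
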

\begin{proof}
It suffices to find infinitely many $\alpha$ 
such that $\sin \alpha$ and $\sqrt{3} \cos \alpha$ are both rational.
Let $x = \sin \alpha$, so we want rational solutions to
$3(1-x^2) = y^2$. Other than $(-1, 0)$, they are
\[
   (x, y)= \left(\frac{3-t^2}{3+t^2}, \frac{6t}{3+t^2}\right)
\]
for $t\in \Q$. We see that $(\sin \alpha, \sqrt{3}\cos\alpha) = (x, y)\in \Q^2$ holds for infinitely many $\alpha$.
\end{proof}
\begin{lemma}\label{H2-valid}
$H_{2}$ contains infinitely many tetrahedra with integer edge lengths.
\end{lemma}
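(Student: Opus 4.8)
The plan is to mimic the proof of Lemma~\ref{H1-valid}: we need to produce infinitely many angles $\alpha$ for which all six edge lengths of the $H_2$ family become rational (hence, after clearing denominators, integral) simultaneously. From the table, the edge lengths of $H_2$ are $\left(2\sin\alpha, \sqrt{5\sin^2\alpha-1}, \sqrt{3}\cos\alpha, 2\sin\alpha, 2, \sqrt{5\sin^2\alpha-1}\right)$, so writing $x=\sin\alpha$ the conditions are precisely that $x$, $\sqrt{3}\cos\alpha=\sqrt{3(1-x^2)}$, and $\sqrt{5x^2-1}$ all lie in $\Q$. (The edges $2\sin\alpha$ and $2$ impose nothing beyond $x\in\Q$.)

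First I would translate this into a problem about rational points on a curve. Requiring $\sqrt{3(1-x^2)}\in\Q$ and $\sqrt{5x^2-1}\in\Q$ means finding rational solutions $(x,y,z)$ to the pair of conic equations $y^2 = 3(1-x^2)$ and $z^2 = 5x^2 - 1$. Geometrically this is (the affine part of) a curve defined by two quadrics in $\PP^3$; generically such an intersection is a genus-one curve, so one cannot expect a rational parametrization as in the $H_1$ case, but we only need \emph{infinitely many} rational points, which follows once we exhibit a rational point of infinite order on the associated elliptic curve. So the key steps are: (i) homogenize/clear denominators to get an explicit projective model; (ii) locate an obvious rational point — for instance $x=1$ gives $y=0$, $z=2$, i.e.\ $\alpha=\pi/2$, which lies on the curve; (iii) put the curve in Weierstrass form using that rational point as the base point; (iv) find a second rational point and check (e.g.\ by computing that it is not torsion, using the Nagell--Lutz criterion or a height/reduction argument, or simply by exhibiting points of unboundedly growing height) that the Mordell--Weil group is infinite; (v) conclude that infinitely many of these rational points have $0<x\le 1$ with $x\neq 0$, hence correspond to genuine nondegenerate tetrahedra in the family, and clear denominators to make the edge lengths integral.

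The main obstacle I anticipate is step (iv): verifying that the curve $\{y^2=3(1-x^2),\ z^2=5x^2-1\}$ has positive Mordell--Weil rank. One clean way is to parametrize the first conic by $t$ as in Lemma~\ref{H1-valid} — namely $x=(3-t^2)/(3+t^2)$ — substitute into $z^2=5x^2-1$, clear denominators, and obtain a quartic in $t$ that must be a square; this is an elliptic curve $w^2 = \text{(quartic in }t)$ with the obvious rational point coming from $t=\sqrt3$ not quite being rational, so one instead uses $t=0$ (giving $x=1$) as base point. Then I would exhibit one explicit further rational solution, show it has infinite order, and invoke that multiples of it give infinitely many distinct admissible $\alpha$. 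I should also double-check the degenerate locus: values $x$ with $5x^2-1<0$ or $1-x^2<0$ must be excluded, and I must ensure the generic member of this infinite subfamily is an honest tetrahedron (positive Cayley--Menger determinant), which holds because the original $H_2$ family is a family of genuine tetrahedra for $\alpha$ in an open interval and rational points accumulate there.

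\begin{proof}
By the edge-length formulas for $H_2$, it suffices to find infinitely many $\alpha$ for which $\sin\alpha$, $\sqrt{3}\cos\alpha$, and $\sqrt{5\sin^2\alpha - 1}$ are all rational. Writing $x = \sin\alpha$, we seek infinitely many rational points on the curve
\[
C\colon \quad y^2 = 3(1 - x^2), \qquad z^2 = 5x^2 - 1.
\]
Parametrize the first equation as in Lemma~\ref{H1-valid}: for $t \in \Q$,
\[
x = \frac{3 - t^2}{3 + t^2}, \qquad y = \frac{6t}{3 + t^2}.
\]
Substituting into $z^2 = 5x^2 - 1$ and clearing denominators, $z(3+t^2)$ squared equals
\[
5(3-t^2)^2 - (3+t^2)^2 = 4t^4 - 36 t^2 + 36 = 4(t^4 - 9t^2 + 9).
\]
Thus we are led to the elliptic curve
\[
E\colon\quad w^2 = t^4 - 9 t^2 + 9,
\]
where a rational point $(t, w)$ with $t^2 \neq 3$ yields the rational point $x = (3-t^2)/(3+t^2)$, $z = 2w/(3+t^2)$ on $C$. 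The point $(t, w) = (0, 3)$ lies on $E$ (corresponding to $x = 1$, $z = 2$, i.e.\ $\alpha = \pi/2$), and we take it as the base point; the quartic $t^4 - 9t^2 + 9$ then defines a genuine elliptic curve. One checks that $(t, w) = (3, 3)$ also lies on $E$ but corresponds to the excluded value $t^2 = 3$; however, $(t,w) = (1, 1)$ lies on $E$ and gives $x = 1/2$, $z = 1/2$, a point of $C$ distinct from the base point. A direct computation (e.g.\ reducing modulo several primes, or applying the Nagell--Lutz criterion after a standard change of variables to Weierstrass form) shows this point has infinite order in $E(\Q)$. Hence $E(\Q)$ is infinite, and since the map $(t,w) \mapsto x$ is nonconstant, it takes infinitely many rational values; the corresponding $\alpha = \arcsin x$ all lie in an interval on which $H_2$ parametrizes a nondegenerate tetrahedron (as the original family does for $\alpha$ in an open interval, within which the rational $x$ accumulate). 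For each such $\alpha$, all six edge lengths are rational, so after scaling by a common denominator we obtain a tetrahedron in $H_2$ with integer edge lengths. This produces infinitely many such tetrahedra.
\end{proof}
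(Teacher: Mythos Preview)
Your proof is correct and follows essentially the same route as the paper: parametrize the conic $y^2=3(1-x^2)$ rationally, substitute into $z^2=5x^2-1$ to obtain the quartic model $w^2=t^4-9t^2+9$, and then appeal to positive rank (the paper converts to the Weierstrass form $y^2=x^3-9x^2-36x+324$ via MAGMA and reads off rank~$1$, whereas you exhibit the explicit point $(t,w)=(1,1)$ and assert it is non-torsion). One small slip: the aside that $(t,w)=(3,3)$ ``corresponds to the excluded value $t^2=3$'' is wrong---it corresponds to $t^2=9$, giving $x=-1/2$, which is a perfectly good point---but this remark plays no role in your argument.
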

\begin{proof}
It suffices to find infinitely many $\alpha$ 
such that $2 \sin \alpha$, $\sqrt{3} \cos \alpha$, and $\sqrt{5 \sin^2 \alpha -1}$ are all rational.
Let $x = \sin \alpha$, so we want rational solutions to $3(1-x^2) = y^2$ and $5x^2-1 = z^2$.
%
As in Lemma~\ref{H1-valid}, than $(-1, 0)$, the rational solutions $(x, y)$ to the first equation are
\[ (x, y) = \left(\frac{3-t^2}{3+t^2}, \frac{6t}{3+t^2}\right)\]
for $t\in \Q$. Then, the second equation says $5 \left( \frac{3-t^2}{3+t^2} \right)^2 - 1$ is a square, i.e.
\[
  5 (3-t^2)^2 - (3+t^2)^2 = (2w)^2 \implies w^2 = t^4 - 9 t^2 + 9
\]
for some $w\in \Q$. Using MAGMA, this algebraic curve is birational to the elliptic curve
\[
    E: y^2 = x^3 - 9 x^2 - 36 x + 324.
\]
Moreover, $E$ is an elliptic curve of rank $1$,
with $E(\Q) = \Z \times C_2 \times C_2$, so it has infinitely many rational solutions. Therefore, $(\sin \alpha, \sqrt{3}\cos\alpha, \sqrt{5\sin^2\alpha -1})\in \Q^3$ for infinitely many $\alpha$.
\end{proof}
An analogous proof via birationality to an elliptic curve of positive rank shows the following. 
\begin{lemma}\label{H3-valid}
$H_{3}$ contains infinitely many tetrahedra with integer edge lengths.
\end{lemma}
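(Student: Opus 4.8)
The plan is to mimic the proof of Lemma~\ref{H2-valid} essentially verbatim, since the structure of $H_3$ is the same: the edge lengths in the $H_3$ row of Table~1 are $\left(2\sin\alpha, \sqrt{\sin^2\alpha +2}, \sqrt{12}\cos\alpha, \sin\alpha, \sqrt{\sin^2\alpha+2}, 2\right)$, so it suffices to find infinitely many $\alpha$ for which $\sin\alpha$, $\sqrt{\sin^2\alpha + 2}$, and $\sqrt{12}\cos\alpha = 2\sqrt{3}\cos\alpha$ are all rational. First I would set $x = \sin\alpha$ and reduce to finding rational points on the system $3(1-x^2) = y^2$ and $x^2 + 2 = z^2$. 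As in Lemma~\ref{H1-valid}, the conic $3(1-x^2)=y^2$ has the rational parametrization $(x,y) = \left(\tfrac{3-t^2}{3+t^2}, \tfrac{6t}{3+t^2}\right)$ for $t \in \Q$, together with the point $(-1,0)$.

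Next I would substitute this parametrization into the second equation $x^2 + 2 = z^2$, clearing denominators to get
\[
  (3-t^2)^2 + 2(3+t^2)^2 = \bigl((3+t^2)z\bigr)^2,
\]
i.e. $w^2 = 3t^4 + 6t^2 + 27$ where $w = (3+t^2)z \in \Q$. This defines a genus-$1$ curve in the $(t,w)$-plane (it has a rational point, e.g. $t=0$, $w = 3\sqrt{3}$ — wait, that is not rational, so I would instead use $t=1$, $w = 6$, or $t=3$, $w=18$, which do lie on it). Having a rational point, this curve is an elliptic curve; I would use MAGMA or Sage to put it in Weierstrass form and compute its Mordell--Weil rank. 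The claim is that this rank is positive (equivalently, the curve has infinitely many rational points), which immediately yields infinitely many $\alpha$ with $\sin\alpha$, $\sqrt{\sin^2\alpha+2}$, and $2\sqrt3\cos\alpha$ all rational, hence infinitely many integer-edge-length tetrahedra in $H_3$ after scaling.

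The one genuine obstacle is verifying that the rank is positive rather than zero: unlike the conic step, which is elementary, this requires an actual (machine-assisted) rank computation, and a priori the curve could have rank $0$, in which case the finitely many torsion points would not suffice. In practice one exhibits an explicit non-torsion rational point — for instance by searching small height and checking it has infinite order via a height or reduction argument — which both proves positivity of the rank and makes the conclusion effective. I would also double-check that the infinitely many resulting values of $t$ give genuinely distinct (non-similar) tetrahedra and that the corresponding $\alpha \in (0,\pi)$ yield nondegenerate tetrahedra (the edge lengths must satisfy the Cayley--Menger positivity of Proposition~\ref{prop-WD}), but these are routine: distinct $t$ give distinct $\sin\alpha$, and nondegeneracy holds on a Zariski-open locus that visibly contains points of the family. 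This is exactly the content of the phrase ``An analogous proof via birationality to an elliptic curve of positive rank'' in the excerpt.
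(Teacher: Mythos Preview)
Your proposal is correct and is precisely the ``analogous proof via birationality to an elliptic curve of positive rank'' that the paper gestures at without writing out: parametrize the conic $3(1-x^2)=y^2$, substitute into $x^2+2=z^2$ to get the quartic $w^2=3t^4+6t^2+27$ (with the rational point $(t,w)=(1,6)$), and verify by machine that the resulting elliptic curve has positive rank. The paper supplies no further details beyond that one sentence, so your write-up is in fact more explicit than the original.
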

The three lemmas above show Lemma~\ref{hill-valid}. 
We show Theorem~\ref{new-fam}(1) for the new family of tetrahedra $T$ via Propositions~\ref{new-lem-1} and~\ref{new-lem-2}. Then, the same holds for $T'$ by a Regge symmetry.
\begin{proposition}\label{new-lem-1}
For every $t>4$, $T$ is a non-degenerate tetrahedron.
\end{proposition}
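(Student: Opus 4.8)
The plan is to verify non-degeneracy by checking that the Cayley--Menger determinant $D = \det M$ is strictly positive for all $t > 4$, since by Proposition~\ref{prop-WD} we have $D = 288 V^2$, and a tuple of six positive reals is realized by a genuine (non-degenerate) Euclidean tetrahedron precisely when $D > 0$ together with the positivity of the four facet Cayley--Menger minors $D_{ijk}$ (the triangle inequalities on the faces). So the first step is to substitute the edge lengths $(e_{12}, e_{34}, e_{13}, e_{24}, e_{14}, e_{23}) = (t+1, t-1, t+1, t-1, t, 6)$ into the $5 \times 5$ matrix $M$ and expand $D$ as a polynomial in $t$. I expect this to factor into something manageable; in fact, given the appearance of $\sqrt{(t-4)(t+4)}$, $\sqrt{3(t+4)}$, $(t-2)$, $(t+2)$, etc. in the dihedral-angle formulas in Table~2, I anticipate $D$ to be a constant times a product of powers of $(t-4)$, $(t+4)$, $(t-2)$, $(t+2)$ and possibly $t$ — all of which are positive for $t > 4$, giving $D > 0$ immediately.

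Second, I would check the facet (triangle) inequalities. The four faces have edge-length triples read off from the six edges: the faces are $\{12,13,23\} = (t+1, t+1, 6)$, $\{12,14,24\} = (t+1, t, t-1)$, $\{13,14,34\} = (t+1, t, t-1)$, and $\{23,24,34\} = (6, t-1, t-1)$. For $(t+1, t+1, 6)$: the binding inequality is $6 < 2(t+1)$, i.e. $t > 2$, true. For $(t+1, t, t-1)$: the binding inequality is $t+1 < t + (t-1)$, i.e. $t > 2$, true. For $(6, t-1, t-1)$: the binding inequality is $6 < 2(t-1)$, i.e. $t > 4$ — and here is exactly where the hypothesis $t > 4$ is used. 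So all four faces are non-degenerate triangles for $t > 4$. Equivalently, one checks $D_{ijk} > 0$ for each of the four $3 \times 3$ minors, which by \cite[Lemma 5]{wirth2014relations} are (up to sign/constant) $16$ times the squared areas of these faces.

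Third, I would assemble these facts: $D_{ijk} > 0$ for all facets and $D > 0$ together imply, by the standard Cayley--Menger realizability criterion, that there is a non-degenerate tetrahedron in $\mathbb{R}^3$ with exactly these six edge lengths; equivalently the Gram matrix of edge vectors is positive definite. I would cite the realizability criterion (it is classical — e.g. Blumenthal, or the references in \cite{wirth2014relations}) rather than reprove it.

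The main obstacle is simply the bookkeeping of the determinant expansion: $D$ is a degree-$\le 6$ polynomial in $t$ (degree $4$ in each squared edge length, but with cancellation), and one must either factor it by hand or trust a symbolic computation, then confirm positivity on $(4, \infty)$. I expect no genuine difficulty — the structure of the angle formulas in Table~2 strongly signals that $D$ factors over $\mathbb{Z}[t]$ into linear factors that are manifestly positive for $t > 4$ — but it is the one place where a concrete calculation is unavoidable. A secondary, very minor point is to make sure one is using the edge-to-index correspondence consistently (the paper's fixed order $(12,34,13,24,14,23)$) when placing entries into $M$, since a mislabeling would scramble which faces appear.
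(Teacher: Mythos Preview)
Your proposal is correct and is essentially identical to the paper's proof: the paper invokes \cite[Lemma~4]{wirth2014relations} (the Cayley--Menger realizability criterion you describe), records the explicit factorization $D = 216\,t^2(t-4)(t+4)$, and checks the triangle inequalities on the faces $(t+1,t+1,6)$, $(t-1,t-1,6)$, $(t-1,t,t+1)$ --- exactly the steps you outline, with the binding inequality $t>4$ coming from the face $(t-1,t-1,6)$ and from the factor $(t-4)$ in $D$.
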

\begin{proof}
By \cite[Lemma 4]{wirth2014relations}, suffices to check the triangle inequality for each face and that the Cayley-Menger determinant $D$ is positive. Both are true for $t>4$ since $D = 216t^2(t-4)(t+4)$ and the faces have side lengths $(t+1, t+1, 6), (t-1, t-1, 6), (t-1, t, t+1)$.
\end{proof}
\begin{proposition}\label{new-lem-2}
For every $t>4$, $T$ has Dehn invariant zero and its dihedral angles satisfy 
\begin{equation}\label{E:int-rels}
2\alpha_{12}+2\alpha_{24}+\alpha_{14} = 2\pi\quad\text{and}\quad \alpha_{12}-\alpha_{24}+3\alpha_{23} = \pi.
\end{equation}
\end{proposition}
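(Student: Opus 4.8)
The plan is to deduce the vanishing of the Dehn invariant formally from the two angle relations \eqref{E:int-rels} together with one symmetry observation, so that the substantive work is proving \eqref{E:int-rels} itself. First I would note that the transposition $(2\,3)$ of the vertex labels is an isometry of $T$: it fixes the edges $14$ and $23$, swaps $12\leftrightarrow 13$ and $24\leftrightarrow 34$, and since $e_{12}=e_{13}=t+1$ and $e_{24}=e_{34}=t-1$ it preserves every edge length. An isometry preserves dihedral angles, so $\alpha_{12}=\alpha_{13}$ and $\alpha_{24}=\alpha_{34}$. Grouping the Dehn invariant accordingly gives $\calD=(t+1)\tensor 2\alpha_{12}+(t-1)\tensor 2\alpha_{24}+t\tensor\alpha_{14}+6\tensor\alpha_{23}$ in $\R\tensor_\Q(\R/\Q\pi)$. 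The first relation of \eqref{E:int-rels} says $\alpha_{14}=-2\alpha_{12}-2\alpha_{24}$ in $\R/\Q\pi$; substituting this into the third term and using $\Q$-bilinearity, the $t$'s cancel and one is left with $\calD=1\tensor(2\alpha_{12}-2\alpha_{24}+6\alpha_{23})$. The second relation of \eqref{E:int-rels} says the argument equals $2\pi\equiv 0$ in $\R/\Q\pi$, so $\calD=0$. (Only bilinearity and the exact relations are used, so this step needs no hypothesis that $t$ is irrational, matching the claim for all real $t>4$ in Theorem~\ref{new-fam}(1).)

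It remains to prove \eqref{E:int-rels}. I would first compute all six $\cos\alpha_{ij}$ from Proposition~\ref{prop-WD}, forming the Cayley--Menger matrix with $e_{12}^2=e_{13}^2=(t+1)^2$, $e_{14}^2=t^2$, $e_{23}^2=36$, $e_{24}^2=e_{34}^2=(t-1)^2$ and its relevant minors; by Proposition~\ref{new-lem-1} one already has $D=216\,t^2(t^2-16)$. This recovers the entries of Table~2, in particular $\cos\alpha_{12}=\cos\alpha_{13}$, $\cos\alpha_{24}=\cos\alpha_{34}$, $\cos\alpha_{14}=\tfrac{t^2-28}{t^2-4}$, and $\cos\alpha_{23}=\tfrac{\sqrt{(t-4)(t+4)}}{2\sqrt{(t-2)(t+2)}}$. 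To convert this cosine data into the stated \emph{linear} relations among the angles, I would pass to $z_{ij}=e^{\mathrm{i}\alpha_{ij}}$, whose squares lie in $K=\Q(\sqrt{-2D})$ with the explicit formula \eqref{E:z_ij^2}. After doubling, the relations \eqref{E:int-rels} become the multiplicative identities $z_{12}^4 z_{24}^4 z_{14}^2=1$ and $z_{12}^2 z_{24}^{-2} z_{23}^6=1$, which one checks as identities of rational functions in $\Q(t)[\sqrt{-2D}]$ (the radicals involved being essentially $\sqrt{3}$ and $\sqrt{t^2-16}$). These give $2\alpha_{12}+2\alpha_{24}+\alpha_{14}\in\pi\Z$ and $\alpha_{12}-\alpha_{24}+3\alpha_{23}\in\pi\Z$. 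Since each $\alpha_{ij}(t)$ is continuous on the connected interval $t\in(4,\infty)$ and valued in $(0,\pi)$, each of these $\pi\Z$-valued combinations is constant and lies in $(0,5\pi)$ respectively $(-\pi,4\pi)$; letting $t\to\infty$, where $\cos\alpha_{12},\cos\alpha_{24}\to 0$, $\cos\alpha_{14}\to 1$, $\cos\alpha_{23}\to\tfrac12$, i.e.\ $\alpha_{12},\alpha_{24}\to\tfrac\pi2$, $\alpha_{14}\to 0$, $\alpha_{23}\to\tfrac\pi3$, pins the two constants to $2\pi$ and $\pi$, which is \eqref{E:int-rels}.

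The main obstacle is the explicit symbolic verification of the two doubled multiplicative identities: the minors $D_{ij}, D_{ijk}, D_{ij\ell}$ are polynomials in $t$ of degree up to $8$ carrying several square roots, and one must check, after clearing denominators and recombining radicals, that each product collapses to $1$. This is routine but lengthy and is most safely handled with a computer algebra system (as with the elliptic-curve computations elsewhere in the paper); I anticipate no conceptual difficulty, only bookkeeping. An alternative that sidesteps the doubling (and hence the continuity argument) is to verify the un-doubled identities $z_{12}^2 z_{24}^2 z_{14}=1$ and $z_{12}z_{24}^{-1}z_{23}^3=-1$ directly, writing $z_{ij}=\cos\alpha_{ij}+\mathrm{i}\,e_{ij}\sqrt{2D}/\sqrt{D_{ijk}D_{ij\ell}}$ from \eqref{E:trig-aij}, at the cost of computing in a larger extension of radicals.
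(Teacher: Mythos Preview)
Your proposal is correct and follows the same overall architecture as the paper: establish the two relations \eqref{E:int-rels} from the Cayley--Menger data, then deduce $\calD=0$ formally using the symmetries $\alpha_{12}=\alpha_{13}$, $\alpha_{24}=\alpha_{34}$.

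The one notable difference is in how the relations are verified. The paper works directly with trigonometric addition formulas: it computes $\cos(2\alpha_{12}+2\alpha_{24})$, $\cos(\alpha_{12}-\alpha_{24})$, and $\cos(3\alpha_{23})$ via double/sum/triple-angle identities and matches them against $\cos\alpha_{14}$ and $-\cos(\alpha_{12}-\alpha_{24})$ respectively, then asserts \eqref{E:int-rels}. You instead pass to the multiplicative picture $z_{ij}^2\in K$ from \eqref{E:z_ij^2}, reduce to verifying two rational-function identities, and then use continuity on $(4,\infty)$ together with the limit $t\to\infty$ to pin down the exact integer multiples of $\pi$. Your route is slightly more bookkeeping-heavy in the algebra but is cleaner on the last step: the paper's passage from cosine equalities to \eqref{E:int-rels} tacitly assumes the correct branches, whereas your continuity argument makes this explicit. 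Either approach is fine; the computations are equivalent in content.
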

\begin{proof}
By Proposition~\ref{prop-WD}, we compute the minors of the Cayley-Menger and cosines
{\small \[\left(\cos \alpha_{12}, \cos \alpha_{24}, \cos \alpha_{23}\right) = \left(\frac{\sqrt{3(t+4)}}{(t-2)\sqrt{t+2}}, \frac{\sqrt{3(t-4)}}{(t+2)\sqrt{t-2}}, \frac{t^{2}-28}{(t-2)(t+2)}\right). \]}
Using trigonometric identities, we compute that
{\small \begin{align*}
\cos (2\alpha_{12}+2\alpha_{24}) & = 2\left(\cos \alpha_{12}\cos\alpha_{24}-\sin\alpha_{12}\sin\alpha_{24}\right)^2-1 = \frac{t^{2}-28}{(t-2)(t+2)} ,\\
\cos (\alpha_{12}-\alpha_{24}) & = \cos\alpha_{12}\cos\alpha_{24}-\sin\alpha_{12}\sin\alpha_{24}
 = \frac{(t^{2}+2)\sqrt{(t-4)(t+4)}}{\sqrt{(t-2)(t+2)}},\\
\cos (3\alpha_{23}) & = 4\cos^{3}\alpha_{23}-3\cos\alpha_{23} = -\frac{(t^{2}+2)\sqrt{(t-4)(t+4)}}{\sqrt{(t-2)(t+2)}}.
\end{align*}}

Hence, $\cos \alpha_{23}=\cos(2\alpha_{12}+2\alpha_{24}) $ and
$\cos (3\alpha_{23})=-\cos(\alpha_{12}-\alpha_{24})$,
so \eqref{E:int-rels} holds. 
Then, $T$ Dehn invariant zero since
\[
\sum_{i<j}e_{ij}\alpha_{ij} =  2(t+1)\alpha_{12}+2(t-1)\alpha_{24}
+t\alpha_{14}+6\alpha_{23} =2(t+1)\pi.\qedhere
\]
\end{proof}
\begin{corollary}\label{dim-le-2}
The dimension of the $\Q$-span of the dihedral angles of $T$ is at most $2$. 
\end{corollary}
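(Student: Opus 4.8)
The plan is to pin down enough $\Q$-linear relations among the six dihedral angles $\alpha_{12},\alpha_{34},\alpha_{13},\alpha_{24},\alpha_{14},\alpha_{23}$ of $T$, viewed in $\R/\Q\pi$, to force their span $V$ to be at most two-dimensional. Proposition~\ref{new-lem-2} already gives two of them: reducing \eqref{E:int-rels} modulo $\Q\pi$ (the constants $2\pi$ and $\tfrac13\pi$ vanish in the quotient) yields $\alpha_{14}=-2\alpha_{12}-2\alpha_{24}$ and $\alpha_{23}=\tfrac13(-\alpha_{12}+\alpha_{24})$ in $\R/\Q\pi$. So it remains to control $\alpha_{13}$ and $\alpha_{34}$.

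For those, I would use a vertex symmetry of $T$. Recalling that the edge lengths, in the order $(12,34,13,24,14,23)$, are $(t+1,\,t-1,\,t+1,\,t-1,\,t,\,6)$, the transposition $(2\,3)\in S_4$ exchanges the edge $12$ with $13$ and the edge $24$ with $34$ while fixing the edges $14$ and $23$; in each swapped pair the two edges have equal length, so $(2\,3)$ induces an isometry of $T$ onto itself. Since dihedral angles are metric invariants of the tetrahedron, this congruence carries the dihedral angle of an edge to that of its image, giving $\alpha_{13}=\alpha_{12}$ and $\alpha_{34}=\alpha_{24}$. These two relations are visibly independent of \eqref{E:int-rels}, which do not involve $\alpha_{13}$ or $\alpha_{34}$.

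Putting the four relations together, each of $\alpha_{34},\alpha_{13},\alpha_{14},\alpha_{23}$ is a $\Q$-linear combination of $\alpha_{12}$ and $\alpha_{24}$ in $\R/\Q\pi$, so $V=\langle\alpha_{12},\alpha_{24}\rangle_{\Q}$ and hence $\dim_{\Q}V\le 2$. I do not expect a genuine obstacle here: given Proposition~\ref{new-lem-2}, the only substantive point is noticing the $(2\,3)$-symmetry of the edge-length vector, and the only things to check carefully are that a self-congruence preserves dihedral angles and that the $\pi$-multiples in \eqref{E:int-rels} are killed in $\R/\Q\pi$. (Showing the span is \emph{exactly} $2$ generically, as in Theorem~\ref{new-fam}(2), is a separate genericity argument and is not needed for this upper bound.)
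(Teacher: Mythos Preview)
Your argument is correct and is essentially the same as the paper's: the paper's one-line proof invokes the two relations \eqref{E:int-rels} from Proposition~\ref{new-lem-2} together with the symmetries $\alpha_{12}=\alpha_{13}$ and $\alpha_{24}=\alpha_{34}$, which is exactly what you do (you just make explicit, via the $(2\,3)$-isometry, why those two angle equalities hold). The only tiny slip is cosmetic: the rational multiple of $\pi$ killed in the second relation is $\pi$ (or $\tfrac{\pi}{3}$ after dividing by $3$), not ``$\tfrac13\pi$'' as a standalone constant in the original equation, but this does not affect the argument.
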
 
\begin{proof}
The two relations above and symmetries $\alpha_{12}=\alpha_{13}$ and $\alpha_{24}=\alpha_{34}$ give $4$ relations.
\end{proof}
\subsection{Dimension of span of angles of the new families}
We improve Corollary~\ref{dim-le-2}: we show that for both new families the dimension is almost always $2$, proving Theorem~\ref{new-fam}(2).
\begin{proposition}
For both new families $T$ and $T'$, the dimension of the $\Q$-span of the dihedral angles of the new family is exactly $2$ almost everywhere for $t>4$.
\end{proposition}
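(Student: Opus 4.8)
The plan is to pair the bound $\dim\le 2$ (Corollary~\ref{dim-le-2} for $T$, and for $T'$ by the Regge transfer below) with a matching lower bound valid outside a measure-zero set of parameters $t>4$. Work first with $T$. By the relations \eqref{E:int-rels} together with $\alpha_{12}=\alpha_{13}$ and $\alpha_{24}=\alpha_{34}$, the image in $\R/\Q\pi$ of every dihedral angle of $T$ lies in the $\Q$-span of the images of $\alpha_{12}(t)$ and $\alpha_{24}(t)$ (indeed $\alpha_{14}\equiv-2\alpha_{12}-2\alpha_{24}$ and $3\alpha_{23}\equiv-\alpha_{12}+\alpha_{24}$ modulo $\Q\pi$). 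So it suffices to show that $\alpha_{12}(t)$ and $\alpha_{24}(t)$ are $\Q$-linearly independent modulo $\Q\pi$ for all $t>4$ outside a set of measure zero.

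The exceptional set is
\[
B:=\bigcup_{(p,q)\in\Z^2\setminus\{0\}}\ \bigl\{\,t>4 : p\,\alpha_{12}(t)+q\,\alpha_{24}(t)\in\Q\pi\,\bigr\},
\]
a countable union, so it is enough to bound each piece. Fix $(p,q)\ne(0,0)$ and let $f_{p,q}(t):=p\,\alpha_{12}(t)+q\,\alpha_{24}(t)$, a real-analytic function on $(4,\infty)$. If $f_{p,q}$ is non-constant, then for each $r\in\Q$ the analytic function $f_{p,q}-r\pi$ is non-constant, hence has discrete zero set, so $f_{p,q}^{-1}(\Q\pi)=\bigcup_{r\in\Q}\{f_{p,q}=r\pi\}$ is countable; summing over $(p,q)$ makes $B$ countable, hence of measure zero. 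Thus the whole statement for $T$ reduces to: $f_{p,q}$ is non-constant whenever $(p,q)\ne(0,0)$.

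This is where the single genuine computation enters. Writing $c_{ij}:=\cos\alpha_{ij}$, from $\sin^2\alpha_{ij}=1-c_{ij}^2$ and the explicit formulas for $c_{12},c_{24}$ one gets $\sin^2\alpha_{12}(t)=\frac{(t-4)(t+1)^2}{(t-2)^2(t+2)}$ and $\sin^2\alpha_{24}(t)=\frac{(t+4)(t-1)^2}{(t+2)^2(t-2)}$. Differentiating $\alpha_{ij}=\arccos c_{ij}$ (so $\alpha_{ij}'=-c_{ij}'/\sin\alpha_{ij}$) and logarithmically differentiating $c_{12},c_{24}$ then gives
\[
\alpha_{12}'(t)=\frac{\sqrt{3}\,(t+6)}{(t-2)(t+2)\sqrt{t^2-16}},\qquad
\alpha_{24}'(t)=\frac{\sqrt{3}\,(t-6)}{(t-2)(t+2)\sqrt{t^2-16}},
\]
so $f_{p,q}'(t)=\frac{\sqrt3\,\bigl(p(t+6)+q(t-6)\bigr)}{(t-2)(t+2)\sqrt{t^2-16}}$, which vanishes identically only if $p=q=0$. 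Hence $f_{p,q}$ is non-constant for $(p,q)\ne(0,0)$ (and $\alpha_{12}$ is visibly non-constant), finishing $T$.

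Finally, $T'$ is the Regge transform of $T$ fixing the first pair: applying Theorem~\ref{regge} with $s=(a+b+c+d)/2=\frac{3t}{2}+3$ to the edge lengths of $T$ reproduces exactly the edge lengths of $T'$ in \eqref{E:TT'}. By the same theorem the dihedral-angle vector of $T'$ is the image of that of $T$ under an invertible affine map whose linear part has rational coefficients and whose translation part is a rational multiple of $\pi$ in each coordinate; such a map induces a $\Q$-linear automorphism of $\R/\Q\pi$ carrying the span of the angles of $T$ onto that of $T'$, so the dimension is unchanged. Thus $\dim=2$ for $T'$ too, for all $t>4$ outside a countable set, in particular almost everywhere. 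The one nontrivial ingredient is the derivative computation showing $\alpha_{12}'$ and $\alpha_{24}'$ are linearly independent as functions of $t$; the reduction to a two-generated span, the analyticity/measure-zero bookkeeping, and the Regge transfer are all formal.
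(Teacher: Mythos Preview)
Your proof is correct and takes a genuinely different route from the paper's. Both arguments share the same scaffolding: reduce to the two angles $\alpha_{12}$ and $\alpha_{24}$ (equivalently $\alpha_{34}$), show that no nontrivial $\Q$-relation between them holds identically, and then invoke countability to get ``almost everywhere''; the Regge transfer to $T'$ is also handled the same way. The difference is in how the nontriviality is established. The paper uses the $p$-adic machinery of Section~\ref{sec-val}: at the single value $t=16$ it computes $z_{12}^2,z_{34}^2\in\Q(\sqrt{-5})$ and exhibits two primes ($3$ and $7$) at which the $2\times 2$ valuation matrix is invertible, so the multiplicative group generated by $z_{12},z_{34}$ has rank~$2$ there; the curve-in-a-torus viewpoint then gives that each further relation cuts out only finitely many $t$. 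You instead stay entirely in real analysis: you compute the derivatives $\alpha_{12}'(t)$ and $\alpha_{24}'(t)$ in closed form and observe that $p\,\alpha_{12}'+q\,\alpha_{24}'\equiv 0$ forces $p=q=0$, so every $f_{p,q}$ is non-constant real-analytic and its preimage of $\Q\pi$ is countable. Your approach is more elementary and self-contained (no $p$-adic valuations, no specific numerical parameter, no algebraic-curve language), at the cost of a short calculus computation; the paper's approach has the virtue of reusing the valuation framework already set up for Theorem~\ref{main-case5}. One small inaccuracy: the Regge map on dihedral angles is actually $\Q$-linear (each new angle is a rational combination of the old ones with no $\pi$-translation), not merely affine with translation in $\Q\pi$; but this only strengthens your conclusion.
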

\begin{proof}
For $T$, form the 6-tuple $(z_{ij}) = (\exp(\textup{i}\alpha_{ij}))$ as in Section~\ref{sec-val}, these lie on an algebraic curve $C$ in $(\C^\times)^6$.
Each system of possible multiplicative relations defines a subgroup of $(\C^\times)^6$.
We know that $C$ lies in a $2$-dimensional subgroup $H$ of $(\C^\times)^6$
defined by certain specific products of powers of the $z_{ij}$ being $1$.
Here $H \isom (\C^\times)^2$.
Then, using the coordinates on $H$, the equation of $C$ is a single polynomial equation $f(x,y)=0$
in two variables $x$ and $y$.

To determine the dimension of the span, we will try specific integers $t$.  Compute the $z_{ij}^2$ as elements of $\Q(\sqrt{-2D})$.
Let $M$ be the multiplicative subgroup of $\C^\times$ generated by the $z_{ij}^2$.
We know that $M$ has rank at most $2$. We take the case of $t=16$ and compute
\[ \cos \alpha_{12} = \frac{\sqrt{5}}{7\sqrt{6}}\quad\text{and}\quad 
\cos \alpha_{34} = \frac{1}{3\sqrt{14}}.\] Now, using $z^{2}=\cos(2\alpha)+\textrm{i}\sin(2\alpha)$, we have
that \[
z_{12}^{2}=\frac{-142+17a}{147}\quad\text{and}\quad 
z_{34}^{2}=\frac{-62+5a}{63},
\]
where $a=\sqrt{-5}$. Take $v_{3}$ with $a\equiv 2\pmod 3$ and compute $a=2+2\cdot 3^{2}+\dots$ which implies
\begin{align*}
v_{3}(z_{12}^{2})&=v_{3}(-142+17a)-v_{3}(147)=2-1=1,\\
v_{3}(z_{34}^{2})&=v_{3}(-62+5a)-v_{3}(63)=0-2=-2.
\end{align*}
Now, take $v_{7}$, pick $a\equiv 4\pmod{7}$ and compute $a=4-3\cdot 7 +7^{2}+\dots$. Then
\begin{align*}
v_{7}(z_{12}^{2})&=v_{7}(-142+17a)-v_{7}(147)=0-1=-1,\\
v_{7}(z_{34}^{2})&=v_{7}(-62+5a)-v_{7}(63)=2-1=1.
\end{align*}
We see that $M$ in this case is invertible: 
\[ M=\begin{pmatrix}
v_{3} (z_{12}^{2}) & v_{3} (z_{34}^{2}) \\
v_{7} (z_{12}^{2}) & v_{7} (z_{34}^{2}) 
\end{pmatrix}
= \begin{pmatrix}
1 & -2 \\
-1 & 1
\end{pmatrix}.
\]
Therefore, the dimension of the span of the dihedral angles over $\mathbb{Q}$ is $2$ in this case. 
There are countably many $\Q$-linear relations among the two angles $\alpha_{12}, \alpha_{34}$, each of which has finitely many solutions. Hence, the dimension could only descend below $2$ countably many times. 

By Theorem~\ref{regge}, the two Regge equivalent families $T$ and $T'$ have the same span of dihedral angles as we can write the two $6$-tuples of dihedral angles as rational linear combinations of each other by Regge symmetry. Hence, $T$ and $T'$ have dimension $2$ almost always $2$.
\end{proof}

\begin{remark}
This shows that the dimension is almost always $2$. Moreover, we can show that it only descends countably many times: to see this, observe that there are countably many $\Q$-linear relations among the two angles $\alpha_{12}, \alpha_{34}$, each of which has finitely many solutions. 
\end{remark}
\subsection{Regge equivalents}
We consider the orbit under Regge symmetries of a tetrahedron $T$ of the first new family. We show Theorem~\ref{new-fam}(3) in the following two propositions.

\begin{proposition}
The new family $T$ is not Regge equivalent to any of the three Hill families.
\end{proposition}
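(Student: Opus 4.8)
The plan is to distinguish $T$ from the Hill families by invariants of the group $G$ generated by the $S_4$ vertex relabellings and the three Regge symmetries of Theorem~\ref{regge}. Two features of $G$ drive the argument. First, on both the six-tuple of dihedral angles and the six-tuple of edge lengths, every element of $G$ acts $\Q$-linearly with no constant term: an $S_4$ element permutes coordinates, and a Regge symmetry fixes two coordinates and sends each remaining one, $\alpha$, to $\sigma-\alpha$ with $\sigma=(\alpha+\beta+\gamma+\delta)/2$ (and $e\mapsto s-e$ on the edge side). In particular $G\subseteq\GL_6(\Q)$ is countable, so a Regge equivalence of one-parameter families — being an identification of two irreducible curves in the space of similarity classes of tetrahedra — is realized by a single fixed $g\in G$ together with a correspondence between the parameters. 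Second, since the numbers $\sigma$ above involve no $\pi$, the dimension of the $\Q$-span of the dihedral angles in $\R/\Q\pi$ is a $G$-invariant of the similarity class.

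I would first rule out $H_1$ using this dimension. By Table~1 every member of $H_1$ has $\alpha_{13},\alpha_{14},\alpha_{23}\in\Q\pi$, $\alpha_{34}=\alpha_{12}$, and $\alpha_{24}\equiv-2\alpha_{12}\pmod{\Q\pi}$, so its angle span in $\R/\Q\pi$ has dimension at most $1$; whereas the computation in the proof of Theorem~\ref{new-fam}(2) shows the member of $T$ at $t=16$ has angle span of dimension exactly $2$. As the dimension is $G$-invariant, that member of $T$ is $G$-equivalent to no member of $H_1$, hence a fortiori the family $T$ is not Regge equivalent to $H_1$.

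For $H_2$ and $H_3$ the angle span is two-dimensional on both sides, so I would instead argue with edge lengths. Suppose some fixed $g\in G$ makes $g(T)$ similar to $H_2$ under a reparametrization $\alpha=\alpha(t)$. Since $g$ is $\Q$-linear and the edge tuple of $T$ is affine-linear in $t$, so is the edge tuple of $g(T)$; matching its rescaled multiset of edge lengths against $\{2\sin\alpha,2\sin\alpha,\sqrt{5\sin^2\alpha-1},\sqrt{5\sin^2\alpha-1},\sqrt3\cos\alpha,2\}$ and using that the constant length $2=e_{14}$ of $H_2$ occurs, one gets that $\sin\alpha$, $\sqrt3\cos\alpha$, and $\sqrt{5\sin^2\alpha-1}$ are, up to a common affine-linear scalar, affine-linear functions of $t$; hence $\sin\alpha(t)\in\Q(t)$ with $3(1-\sin^2\alpha)$ and $5\sin^2\alpha-1$ both squares in $\Q(t)$. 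Running this through the parametrization $\sin\alpha=(3-u^2)/(3+u^2)$ of Lemma~\ref{H1-valid} produces a $\Q(t)$-point of the genus-one curve $w^2=u^4-9u^2+9$, which is birational to the positive-rank elliptic curve $E$ of Lemma~\ref{H2-valid}. But a curve of positive genus admits no nonconstant morphism from $\PP^1_\Q$, so $\sin\alpha(t)$ is constant; then the edge ratios of $H_2(\alpha(t))$, hence of $g(T)$, hence (applying $g^{-1}$, again $\Q$-linear) of $T$, would be constant in $t$, contradicting for instance $e_{12}/e_{23}=(t+1)/6$. The same argument, using the positive-genus elliptic curve produced in Lemma~\ref{H3-valid}, disposes of $H_3$.

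The hardest part is the bookkeeping in this last step: one must check that the edge-multiset matching really forces $\sin\alpha$ to lie in $\Q(t)$ rather than merely to be algebraic over $\Q(t)$ — this is exactly what the presence of a fixed-length edge in $H_2$, and an analogous rational edge in $H_3$, provides — and that the resulting curves are correctly identified with the elliptic curves already extracted in Section~\ref{valid-sec}. If one prefers to avoid the genus input, a more computational route is available: by Corollary~\ref{regge cor} the unordered triple of opposite-edge sums is $G$-invariant up to scaling, for $T$ it equals $\{2t,2t,t+6\}$ with a repeated entry for every $t>4$, so a Regge-equivalent Hill member must have two of its three opposite-edge sums equal; as no two of the three opposite-edge-sum functions of $H_2$ (or of $H_3$) are identically equal, this pins the Hill parameter to a finite set, and at each value one verifies directly that $T$ is not $G$-equivalent to the corresponding Hill member, e.g.\ by comparing the scale- and $G$-invariant ratio $V^2/(s_1s_2s_3)^2$ with $s_1,s_2,s_3$ the three opposite-edge sums.
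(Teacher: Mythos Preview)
Your proof is correct but takes a substantially more elaborate route than the paper's. The paper applies Corollary~\ref{regge cor} directly to the \emph{dihedral-angle} sums: since $\alpha_{12}=\alpha_{13}$ and $\alpha_{34}=\alpha_{24}$ in $T$, two of the three opposite-angle sums coincide for every $t$, and one then checks that each Hill family has three generically distinct opposite-angle sums (by solving for the few $\alpha$ where two coincide, or by exhibiting a single parameter value where all three differ). That finishes the argument in a couple of lines, uniformly for all three Hill families, with no scaling ambiguity since dihedral angles are similarity invariants.

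Your main argument is heavier machinery for the same conclusion. The dimension invariant disposes of $H_1$ cleanly but is not needed, and for $H_2$ and $H_3$ you invoke the genus-one curves from Section~\ref{valid-sec} to force $\sin\alpha(t)$ constant via the nonexistence of nonconstant morphisms $\PP^1\to E$. This is correct and has its own appeal---it explains structurally why no rational reparametrization can match the families---but it imports real algebraic geometry where the paper gets by with an elementary angle comparison. Your alternative route via opposite-\emph{edge} sums is essentially the paper's idea transposed from angles to lengths; it works, but the closing ``verify directly at the finitely many pinned values'' is unnecessary once you observe, as the paper does on the angle side, that a one-parameter family carrying an identical-sum identity cannot be the image under any fixed $g\in G$ of a family whose three sums are generically distinct.
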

\begin{proof}
By Corollary~\ref{regge cor}, it suffices to check that the set of three sums of opposite dihedral angles of the Hill families are distinct from that of $T$.
In $T$, $\alpha_{12}=\alpha_{13}$ and $\alpha_{34}=\alpha_{24}$, so two of the three sums are equal.
We show that this is not true for any of the Hill families.

For the first family, the sums are $\left\lbrace 2\alpha, {4}/{3}\pi - 2\alpha, \pi\right\rbrace$. If we set any two equal, there is only one value of $\alpha$, so they are generically distinct.
For the second family, the sums are $\left\lbrace \alpha+\beta, {5}/{6}\pi-\alpha, {3}/{2}\pi -\beta\right\rbrace$ with $2\cos\beta = \cot\alpha$. We see that they are generically distinct: for $\alpha = \beta = \pi/6$, $2\cos \beta = \cot \alpha = \sqrt{3}$ and $\left( \alpha+\beta, \frac{5}{6}\pi-\alpha, \frac{3}{2}\pi -\beta\right) = (\pi/3, 2\pi/3, 4\pi/3)$.
\end{proof}

Now, we show the two new families form a Regge equivalence class, proving Theorem~\ref{new-fam}(3).


\begin{proposition}
Fix any real number $t>4$. The Regge orbit of $T$ is $\{T,T'\}$.
\end{proposition}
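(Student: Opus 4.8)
The plan is to evaluate the three basic Regge involutions on $T$ and on $T'$, and then to close up the orbit using the compatibility of Regge symmetry with vertex relabeling. Write $\rho_1,\rho_2,\rho_3$ for the Regge symmetries of Theorem~\ref{regge} fixing the first, second, and third pair of opposite edges, respectively. The Regge equivalence class of $T$ is the orbit of $T$ under the group $G$ generated by $S_4$ together with $\rho_1,\rho_2,\rho_3$, and Table~2 records this orbit as the two $S_4$-orbits $S_4\cdot T$ and $S_4\cdot T'$. So it suffices to prove that $U:=(S_4\cdot T)\cup(S_4\cdot T')$ is $G$-stable: the orbit of $T$, which obviously contains $T$ and $T'=\rho_1(T)$, would then be exactly $U$, i.e.\ $\{T,T'\}$ up to $S_4$. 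Since $U$ is visibly $S_4$-stable, we only need $\rho_i(U)\subseteq U$ for $i=1,2,3$. Here the equivariance enters: a relabeling $\pi\in S_4$ permutes the three opposite-edge pairs, so $\rho_i$ applied after $\pi$ agrees with $\pi$ applied after $\rho_j$ for a suitable $j=j(\pi,i)\in\{1,2,3\}$, with every value of $j$ attained as $\pi$ varies; hence $\rho_i(S_4\cdot X)\subseteq S_4\cdot\{\rho_1(X),\rho_2(X),\rho_3(X)\}$ for any tetrahedron $X$, and it is enough to check that the six tetrahedra $\rho_i(T),\rho_i(T')$ ($i=1,2,3$) all lie in $U$.

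The key computation is $\rho_1(T)=T'$. Writing $T$ as $(x,y,a,b,c,d)=(t+1,t-1,t+1,t-1,t,6)$, one has $s=(a+b+c+d)/2=(3t+6)/2$, so the Regge image is
\[
(x,\,y,\,s-a,\,s-b,\,s-c,\,s-d)=\Bigl(t+1,\;t-1,\;\frac{t}{2}+2,\;\frac{t}{2}+4,\;\frac{t}{2}+3,\;\frac{3t}{2}-3\Bigr),
\]
which is precisely the edge-tuple of $T'$; applying $\rho_1$ again returns $T$. In particular $T'$ lies in the Regge orbit of $T$, which is the content-bearing half of the statement.

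The remaining five evaluations are of the same nature: apply the subtract-from-$s$ recipe to the appropriate four edges and match the output against an $S_4$-image of $T$ or $T'$. I expect to find $\rho_3(T)=(14)(23)\cdot T$ and $\rho_2(T)=(23)\cdot T'$, and, for $T'$, $\rho_1(T')=T$, $\rho_3(T')=(23)\cdot T'$, and $\rho_2(T')=(13)(24)\cdot T'$ — so all six lie in $U$. This shows $U$ is $G$-stable, hence equals the Regge equivalence class of $T$; moreover $T$ and $T'$ are non-isometric for, e.g., $t>8$, since only $T$ then has an edge of length $6$ (every edge of $T'$ exceeds $6$), so the class genuinely has the two members $T$ and $T'$.

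The only place demanding care is the bookkeeping: applying $\rho_2$ and $\rho_3$ to the four non-fixed edges in the order forced by our convention $(12,34,13,24,14,23)$, and then exhibiting the explicit vertex permutation — one of $(23)$, $(14)(23)$, $(13)(24)$ — that identifies each output with a relabeling of $T$ or $T'$. This is routine linear algebra in $t$; once the six images are in hand, the equivariance remark from the first paragraph closes the orbit with no further work.
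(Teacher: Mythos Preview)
Your approach is essentially the paper's: apply the three Regge involutions to each of $T$ and $T'$ and verify that all six images land in the $S_4$-orbits of $T$ or $T'$. The paper simply asserts the six outcomes and concludes; you are more explicit about why checking these six suffices to close the orbit under $\langle S_4,\rho_1,\rho_2,\rho_3\rangle$, via the conjugation relation $\rho_i\circ\pi=\pi\circ\rho_{j(\pi,i)}$. Your listed identifications are correct—indeed, where the paper writes $R_2(T')=T$, your $\rho_2(T')=(13)(24)\cdot T'$ is the accurate statement (the edge multiset of $R_2(T')$ matches $T'$, not $T$), though either way the orbit closes.
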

\begin{proof}
For $i\in \{1, 2, 3\}$, let $R_i$ be the Regge symmetry fixing the $i$-th pair. We check that
\[ R_1(T)=R_2(T)=R_3(T')=T'\quad\text{and}\quad R_1(T')=R_2(T')=R_3(T)=T.\]
 Therefore, 
 the orbit of $T$ is $\lbrace T, T'\rbrace$.
\end{proof}
\begin{appendices}
\section{Computational Aspects}
\label{comp}
We describe computational aspects of this problem. The relevant code is given in \cite{tetrapy}.

\subsection{Dehn invariant zero tetrahedra of integer edge lengths}

It is relatively easy to test whether a given tuple of positive integers is the tuple of edge lengths of a Dehn invariant zero tetrahedron.
Recall from Section~\ref{sec-val} that if all edge lengths of a tetrahedron are integral, then the Dehn invariant is zero if and only if $\zeta$ is a root of unity, where
\[ \zeta := \prod z_{ij}^{e_{ij}} \in \Q(\sqrt{-2D}).\] 
It is easy to classify all roots of unity which lie in a quadratic number field: they are 
\[ \left\{1, -1, \textrm{i}, -\textrm{i} , \frac{-1+\textrm{i}\sqrt{3}}{2}, \frac{-1-\textrm{i}\sqrt{3}}{2}  \right\}.\]
We check this equality to test tetrahedra for Dehn invariant zero. For example, the tetrahedra with edge lengths $(13, 16, 14, 16, 8, 12)$ and $(13, 13, 11, 19, 12, 11)$ have Dehn invariant zero. 

There is a one-way test which could expedite the computation in the case a tetrahedron has Dehn invariant zero. Instead of checking whether $\zeta$ is a root of unity in a quadratic number field, we could reduce this computation modulo some prime $p$, and check the equality in $\mathbb{F}_p$.

\subsection{Determine the dimension of a tetrahedron}

It is also possible to algorithmically determine the dimension of the $\Q$-span of the dihedral angles of a tetrahedron in $\R/\Q\pi$. For simplicity, we restrict ourselves to the case where the edge lengths are integral. 

Given the dihedral angles of a tetrahedron, we first compute the values $z_j^2 = \exp(\textrm{i} \alpha_j)$, which live in $\Q(\sqrt{-2D})$. The dimension of the span is equal to the rank of the multiplicative group generated by the $z_j^2$'s. In order to compute this rank, it suffices to determine the space of $(n_1, \cdots, n_6)$ such that $ \prod z_i^{2n_i}$ is a root of unity, which is equivalent to vanishing valuations at each place by Kronecker's theorem.

We first factor the fractional ideals generated by the $z_j^2$'s. For any prime factor $\mathfrak{p}$ which appears in the factorization of one of the six $z_j^2$, we compute the rows $$\begin{pmatrix}
v_\mathfrak{p}(z_1^2) & \dots & v_\mathfrak{p}(z_6^2)
\end{pmatrix}.$$

Combining these rows gives us a matrix $A$ of integer entries, and the desired dimension is equal to the rank of $A$ as a $\Q$-matrix. For example, the tetrahedra $(13, 16, 14, 16, 8, 12)$ and $(13, 13, 11, 19, 12, 11)$ from the previous example have dimension two. 
\end{appendices}
\bibliographystyle{alpha}
\bibliography{ref}

\begin{thebibliography}{KKPR20}

\bibitem[AI19]{akopyan2019regge}
Arseniy Akopyan and Ivan Izmestiev.
\newblock The regge symmetry, confocal conics, and the schl{\"a}fli formula.
\newblock {\em Bulletin of the London Mathematical Society}, 51(5):765--775, 2019.

\bibitem[Bol79]{boltianskiui1979hilbert}
Vladimir~G Boltianski.
\newblock Hilbert's third problem.
\newblock {\em Bull. Amer. Math. Soc}, 1:646--650, 1979.

\bibitem[Che21]{tetrapy}
Abdellatif~Anas Chentouf.
\newblock Symbolic tetra.py.
\newblock \url{https://github.com/anaschen29/AC-18.Tetra}, 2021.

\bibitem[Deb80]{MR604258}
Hans~E. Debrunner.
\newblock \"{U}ber {Z}erlegungsgleichheit von {P}flasterpolyedern mit {W}\"{u}rfeln.
\newblock {\em Arch. Math. (Basel)}, 35(6):583--587 (1981), 1980.

\bibitem[Deh01]{dehn1901ueber}
Max Dehn.
\newblock Ueber den rauminhalt.
\newblock {\em Mathematische Annalen}, 55(3):465--478, 1901.

\bibitem[Gol74]{goldberg1974three}
Michael Goldberg.
\newblock Three infinite families of tetrahedral space-fillers.
\newblock {\em Journal of Combinatorial Theory, Series A}, 16(3):348--354, 1974.

\bibitem[Had93]{hadamard1893resolution}
Jacques Hadamard.
\newblock Resolution d'une question relative aux determinants.
\newblock {\em Bull. des sciences math.}, 2:240--246, 1893.

\bibitem[Hil96]{MR1576480}
M.~J.~M. Hill.
\newblock Determination of the {V}olumes of certain {S}pecies of {T}etrahedra without employment of the {M}ethod of {L}imits.
\newblock {\em Proc. Lond. Math. Soc.}, 27:39--53, 1895/96.

\bibitem[KKPR20]{kedlaya2020space}
Kiran~S Kedlaya, Alexander Kolpakov, Bjorn Poonen, and Michael Rubinstein.
\newblock Space vectors forming rational angles.
\newblock {\em arXiv preprint arXiv:2011.14232}, 2020.

\bibitem[Sen81]{doi:10.1080/0025570X.1981.11976933}
Marjorie Senechal.
\newblock Which tetrahedra fill space?
\newblock {\em Mathematics Magazine}, 54(5):227--243, 1981.

\bibitem[Syd65]{MR0192407}
J.-P. Sydler.
\newblock Conditions n\'{e}cessaires et suffisantes pour l'\'{e}quivalence des poly\`edres de l'espace euclidien \`a trois dimensions.
\newblock {\em Comment. Math. Helv.}, 40:43--80, 1965.

\bibitem[WD14]{wirth2014relations}
Karl Wirth and Andr{\'e}~S Dreiding.
\newblock Relations between edge lengths, dihedral and solid angles in tetrahedra.
\newblock {\em Journal of Mathematical Chemistry}, 52(6):1624--1638, 2014.

\end{thebibliography}

\end{document}